\documentclass[11pt]{article}

\usepackage{amsmath}
\usepackage{amssymb}
\usepackage{amsthm}
\usepackage{mathtools}

\usepackage[a4paper,margin=1in]{geometry}

\usepackage{authblk}
\usepackage{hyperref}
\usepackage{enumitem}

\usepackage{microtype}
\newtheorem{theorem}{Theorem}[section]
\newtheorem{lemma}[theorem]{Lemma}

\newtheorem{corollary}[theorem]{Corollary}
\newtheorem*{conjecture}{Conjecture}

\theoremstyle{definition}

\theoremstyle{remark}

\begin{document}

\title{Multicolor Ramsey and list Ramsey numbers for star-like trees}

\author[1]{Qinghong Zhao}
\author[2]{Yaping Mao\thanks{Corresponding author.}}
\author[3]{Xiangqian Zhou}

\affil[1]{\small School of Mathematical Sciences, Huaqiao University,
Quanzhou 362021, Fujian, China}

\affil[2]{\small Academy of Plateau Science and Sustainability and
School of Mathematics and Statistics, Qinghai Normal University,
Xining, Qinghai 810008, China}

\affil[3]{\small Department of Mathematics and Statistics,
Wright State University, Dayton, OH 45435, USA}

\date{}

\maketitle

\begin{abstract}
For a graph \(H\), the \(k\)-color Ramsey number \(r(H;k)\) is the least integer \(N\) such that every \(k\)-edge-coloring of \(K_N\) contains a monochromatic copy of \(H\). A \(k\)-list assignment has \(|L(e)|=k\) for every edge. The list Ramsey number \(r_\ell(H;k)\) is the least integer \(N\) for which there exists a \(k\)-list assignment on \(E(K_N)\) such that every coloring from the lists contains a monochromatic copy of \(H\). Let \(K_{1,n}\) be a star, \(S(n,m)\) the double star obtained by joining the centers of \(K_{1,n}\) and \(K_{1,m}\), and \(S_n^m\) the graph obtained from \(K_{1,n}\) by subdividing \(m\) edges once. Alon et al.\ conjectured that \(r_\ell(K_{1,n};k)=r(K_{1,n};k)\) for all \(k,n\ge1\). In this paper, we confirm their conjecture for all \(k\ge1\) and \(n\ge3\) by a unified direct proof. For even \(k\ge4\) and under explicit parameter conditions, we prove that \(r(S(n,m);k)=kn+m+2\) for even \(n\) and \(r(S_n^m;k)=k(n-1)+m+2\) for odd \(n\). For two colors, we establish a general list Ramsey lower bound and determine the common values of Ramsey and list Ramsey numbers for double and subdivided stars in explicit parameter ranges. These results close several gaps in the known bounds.

\noindent{\textbf{Keywords:} Ramsey number; list Ramsey number; star; double star; subdivided star}

\noindent{\textbf{2020 Mathematics Subject
Classification:} 05C55, 05C15, 05D10.}
\end{abstract}

\footnotetext{Email-address: qzhao@hqu.edu.cn (Q. Zhao), yapingmao@outlook.com (Y. Mao), xiangqian.zhou@wright.edu (X. Zhou).}

\section{Introduction}
All graphs considered in this paper are finite, simple, and undirected,
unless otherwise stated. For a graph \(G\), let
\(V(G)\) and \(E(G)\) denote its vertex and edge sets. Let
\(N_G(v)\) and \(d_G(v)\) denote the neighborhood and degree of
\(v\in V(G)\), respectively, and let \(\Delta(G)\) denote the maximum
degree of \(G\). For disjoint sets \(X,Y\subseteq V(G)\), let
\(G[X]\) denote the subgraph induced by \(X\) and \(G[X,Y]\) the
bipartite subgraph consisting of the edges between \(X\) and \(Y\). We use \(K_n\) and \(K_{s,t}\)
for the complete graph and complete bipartite graph, respectively; in
particular, \(K_{1,n}\) is the star with \(n\) leaves. For an integer $k\ge1$, we define $[k]=\{1,\ldots,k\}$.

For a positive integer \(k\), a \(k\)-edge-coloring of a graph \(G\)
is a map \(\varphi:E(G)\to[k]\). The \(k\)-color Ramsey number
\(r(H;k)\) is the least integer \(N\) such that every
\(k\)-edge-coloring of \(K_N\) contains a monochromatic copy of \(H\). Two-color Ramsey problems with different target graphs, including stars and trees, have been extensively studied. Yan and
Peng~\cite{YanPengTreeStar} obtained tree-embedding conditions
and applied them to derive upper bounds and exact values for
tree--star Ramsey numbers. Zhang, Broersma, and
Chen~\cite{ZhangBroersmaChenCycleStar} determined cycle--star
Ramsey numbers for even cycle lengths in two explicit ranges.
Allen, {\L}uczak, Polcyn, and Zhang~\cite{AllenEtAlCycleStar}
later obtained an exact formula when the even cycle is
sufficiently long relative to the star. The diagonal multicolor Ramsey number of a star was determined
by Burr and Roberts \cite{BurrRoberts}.
\begin{theorem}[Burr and Roberts~\cite{BurrRoberts}]
\label{thm:star-ramsey}
For \(k\ge1\) and \(n\ge1\),
\[
 r(K_{1,n};k)=
 \begin{cases}
 k(n-1)+1,&\text{if \(k\) and \(n\) are even},\\
 k(n-1)+2,&\text{otherwise}.
 \end{cases}
\]
\end{theorem}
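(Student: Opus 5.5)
We prove the upper and lower bounds separately. Both rest on one observation: a $k$-edge-coloring of $K_N$ has no monochromatic $K_{1,n}$ if and only if every color class has maximum degree at most $n-1$.

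\textbf{Upper bound.} Let $N=k(n-1)+2$. Every vertex has degree $N-1=k(n-1)+1$, so by pigeonhole some color appears at least $n$ times at that vertex. This gives a monochromatic $K_{1,n}$, so $r(K_{1,n};k)\le k(n-1)+2$ in all cases.

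For the even case, let $k$ and $n$ be even and $N=k(n-1)+1$, which is odd. Every vertex has degree $k(n-1)$. If no color has degree $\ge n$ at some vertex, then every color has degree exactly $n-1$ at every vertex. So each color class is $(n-1)$-regular on $N$ vertices. Since both $n-1$ and $N$ are odd, the degree sum $N(n-1)$ is odd, which contradicts the handshake lemma. Hence $r\le k(n-1)+1$ in this case.

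\textbf{Lower bounds.} We decompose a complete graph into $k$ color classes of maximum degree at most $n-1$.

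First take the case where $k$ or $n$ is odd. We color $K_{k(n-1)+1}$ so that each color class is $(n-1)$-regular. We use the classical fact that $K_{2m+1}$ decomposes into $m$ Hamiltonian cycles (Walecki), and $K_{2m}$ decomposes into $m-1$ Hamiltonian cycles and a perfect matching, or equivalently into $2m-1$ perfect matchings.

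If $n-1$ is even, then $N=k(n-1)+1$ is odd. By Walecki, $K_N$ splits into $k(n-1)/2$ Hamiltonian cycles. Give each color $(n-1)/2$ of these cycles.

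If $n-1$ is odd, then $n$ is even, so $k$ is odd by assumption, and $N=k(n-1)+1$ is even. Split $K_N$ into $N-1=k(n-1)$ perfect matchings and give each color $n-1$ of them.

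In both sub-cases every color class is $(n-1)$-regular, so there is no monochromatic $K_{1,n}$ and $r\ge k(n-1)+2$.

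Now take $k$ and $n$ both even. We need a coloring of $K_{k(n-1)}$. Here $N=k(n-1)$ is even, so $K_N$ splits into $N-1=k(n-1)-1$ perfect matchings. Give $n-1$ matchings to each of $k-1$ colors and $n-2$ matchings to the last color. Every color then has maximum degree at most $n-1$, so $r\ge k(n-1)+1$.

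\textbf{Main obstacle.} The only nontrivial input is the one-factorization of $K_{2m}$ and the Hamiltonian decomposition of $K_{2m+1}$. Both are classical and can be cited. The remaining work is checking the parity bookkeeping in each case.
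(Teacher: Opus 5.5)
Your proof is correct and complete. It also covers the degenerate cases: for \(n=1\) the Walecki step is vacuous on \(K_1\), and for \(n=2\) your constructions are just one-factorizations of \(K_{k+1}\) (\(k\) odd) or \(K_k\) (\(k\) even).

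One wording fix in the even case. The assumption should be that no color has degree at least \(n\) at \emph{any} vertex, i.e.\ that there is no monochromatic \(K_{1,n}\). Only then does summing over the \(k\) colors force every color class to be \((n-1)\)-regular on an odd number of vertices, which the handshake lemma forbids.

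The paper does not prove this theorem. It quotes it from Burr and Roberts and uses it as a black box, for instance for the upper bound in the proof of Theorem~\ref{thm:list-star}, so there is no in-paper argument to match. What you give is the standard argument.

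Note, though, that the proof of Theorem~\ref{thm:list-star} implicitly reproves your lower bound for \(n\ge3\). Take every list equal to \([k]\). Then the tournament on \(t=k(n-1)+\varepsilon\) vertices, together with Galvin's theorem (which here reduces to K\H{o}nig's edge-coloring theorem for bipartite multigraphs), produces a \(k\)-coloring of \(K_t\) in which each color has degree at most \(n-1\) at every vertex. That route is heavier and needs \(n\ge3\), but it works for arbitrary lists. Your one-factorizations and Hamiltonian decompositions are more elementary and handle every \(n\ge1\), but they rely on all edges sharing the same palette.
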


Two related families are double stars and subdivided stars. For
\(n\ge m\ge1\), the double star \(S(n,m)\) is obtained from disjoint
copies of \(K_{1,n}\) and \(K_{1,m}\) by joining their centers. For
\(n\ge2\) and \(1\le m\le n\), let \(S_n^m\) be obtained from
\(K_{1,n}\) by subdividing \(m\) edges once; thus
\(S_n^1=S(n-1,1)\). Grossman, Harary, and
Klawe~\cite{GrossmanEtAl} determined \(r(S(n,m);2)\) when
\(n\le\sqrt{2}m\) or \(n\ge3m\), as well as for every \(n\) when
\(m=1\). Flores Dub\'o and Stein~\cite{FloresStein} obtained a
further two-color upper bound with
\(\frac{1+\sqrt5}{2}m<n<3m\), while Bal, DeBiasio, and
Oren-Dahan~\cite{BalEtAl} obtained multicolor bounds for balanced
double stars.

Ruotolo and Song~\cite{RuotoloSong} studied both \(S(n,m)\) and
\(S_n^m\). Under explicit conditions on \(n,m,k\), they obtained
exact values for odd \(k\) and corresponding upper bounds for
every \(k\); for even \(k\), their bounds did not in general
coincide. In this paper, we give a lower-bound construction that applies to both
families and yields exact values in the following two even-color
ranges. For subdivided stars, we also improve the counting estimate
used for the upper bound.

\begin{theorem}
\label{thm:double-star}
Let \(k\ge4\) and \(n\ge m\ge1\), with \(k\) and \(n\) even. If $(n+1)\left\lceil\frac{n+1}{k-1}\right\rceil
 >m\bigl((k-1)n+m\bigr)$, then \(r(S(n,m);k)=kn+m+2\).
\end{theorem}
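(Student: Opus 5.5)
The plan is to prove the two inequalities separately: a construction for \(r(S(n,m);k)\ge kn+m+2\), which is where the hypothesis that \(k\) and \(n\) are even enters, and a counting argument for \(r(S(n,m);k)\le kn+m+2\), which is where the inequality \((n+1)\lceil\frac{n+1}{k-1}\rceil>m((k-1)n+m)\) should be needed. For the upper bound I would first try to reuse the general upper bound of Ruotolo and Song~\cite{RuotoloSong} directly; if it does not literally give \(kn+m+2\) under our hypothesis, I will redo the counting argument below.

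\emph{Upper bound.} Fix a \(k\)-edge-coloring of \(K_N\) with \(N=kn+m+2\). Every vertex has degree \(kn+m+1>kn\), so by pigeonhole each vertex \(u\) has a color \(c\) with \(d_c(u)\ge n+1\). Call \(u\) \emph{\(c\)-big} if \(d_c(u)\ge n+1\), and call a vertex \emph{\(c\)-small} if \(d_c(v)\le m\). A monochromatic \(S(n,m)\) arises from an edge \(uv\) of color \(c\) with \(u\) \(c\)-big and \(d_c(v)\ge m+1\), provided we can choose the leaves disjointly. Choosing the \(m\) leaves at \(v\) first and then the \(n\) leaves at \(u\) from what remains needs \(|N_c(u)\cup N_c(v)|\ge n+m+2\). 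If this union is smaller, \(N_c(u)\) and \(N_c(v)\) overlap heavily, and I would switch to another big vertex inside the overlap. So assume no such configuration exists. Then every \(c\)-neighbour of a \(c\)-big vertex is \(c\)-small. Now double count. Let \(B_c\) be the set of \(c\)-big vertices. Each \(u\in B_c\) sends at least \(n+1\) edges of color \(c\) into \(c\)-small vertices, and each \(c\)-small vertex absorbs at most \(m\) such edges. Since every vertex is big in at least one color, \(\sum_c|B_c|\ge N\). A \(c\)-small vertex has total degree at most \(m\) in color \(c\), so its remaining degree \(\ge (k-1)n+1\) is spread over the other \(k-1\) colors. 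It is therefore big in some other color, and in fact at least \(\lceil\frac{n+1}{k-1}\rceil\)-fold concentrated there. Combining these counts, roughly \(\sum_c |B_c|(n+1)\lceil\frac{n+1}{k-1}\rceil\le m\sum_c(\text{small})\cdot((k-1)n+m)\), should contradict exactly the hypothesis \((n+1)\lceil\frac{n+1}{k-1}\rceil>m((k-1)n+m)\). Getting the bookkeeping of this double count to produce precisely that inequality is the step I expect to be the main obstacle.

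\emph{Lower bound.} We need a coloring of \(K_{kn+m+1}\) with no monochromatic \(S(n,m)\). Start from \(A\) with \(|A|=kn+1\), which is odd because \(k\) and \(n\) are even. Since \(kn\) is even, \(K_{kn+1}\) has a Hamiltonian decomposition (Walecki), so it decomposes into \(k\) spanning \(n\)-regular color classes. This is the Burr--Roberts extremal coloring (Theorem~\ref{thm:star-ramsey}): within \(A\) no vertex has \(n+1\) neighbours of one color. Add a set \(B\) of \(m\) new vertices. Color the edges of \(B\) and between \(A\) and \(B\) so that every vertex of \(A\) gains at most one edge per color in a controlled way. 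Concretely, I would let each \(b\in B\) use a perfect-matching-like pattern: remove a near-perfect matching from one color class of \(A\), and reroute those vertices to \(B\). The goal is that any vertex of color-\(c\) degree \(\ge n+1\) is adjacent in color \(c\) only to vertices of color-\(c\) degree \(\le m\), namely the vertices of \(B\) and their partners. The parity of \(k\) and \(n\) is used to make these near-factors exist. Once the coloring is fixed, checking that it contains no monochromatic \(S(n,m)\) is routine.
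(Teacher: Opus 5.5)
\textbf{Upper bound.} This part is fine once you commit to the citation. The hypothesis is verbatim the condition of Theorem~\ref{thm:ruotolo-song-upper}, which gives \(r(S(n,m);k)\le kn+m+2\); this is exactly what the paper does. Your fallback double count is not needed, and its reduction to ``every \(c\)-neighbour of a \(c\)-big vertex is \(c\)-small'' is not justified anyway.

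\textbf{Lower bound: the gap.} The target you set for the coloring of \(K_N\), \(N=kn+m+1\), cannot be met anywhere in the range of the theorem. That target is: every vertex of color-\(c\) degree \(\ge n+1\) has only color-\(c\) neighbours of color-\(c\) degree \(\le m\). Take \(B_c\) and \(S_c\) to be the \(c\)-big and \(c\)-small vertices as in your sketch; they are disjoint since \(m\le n\). Every vertex has degree \(kn+m>kn\), so it lies in some \(B_c\), and hence \(\sum_c|B_c|\ge N\). Your target gives \((n+1)|B_c|\le m|S_c|\le m(N-|B_c|)\) for each \(c\). Summing over \(c\) yields \(n+1\le m(k-1)\). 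But the hypothesis forces \(n+1>m(k-1)\): otherwise \(\lceil\frac{n+1}{k-1}\rceil\le m\), and then \((n+1)\lceil\frac{n+1}{k-1}\rceil\le m(n+1)\le m((k-1)n+m)\). So the double count from your own upper-bound sketch refutes your lower-bound plan, and the unspecified ``rerouting'' step cannot rescue it.

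\textbf{The missing idea.} Let big vertices be adjacent to one another, but confine them to one small component. For each color \(c\), all vertices of color-\(c\) degree \(\ge n+1\) should lie in a single color-\(c\) component of order \(n+m+1<|V(S(n,m))|\). Since \(S(n,m)\) is connected with maximum degree \(n+1\), any monochromatic copy would have to sit inside that component, which is impossible.

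The paper realizes this with \(V=A\cup V_1\cup\cdots\cup V_k\), where \(|A|=m+1\) and \(|V_i|=n\).
\begin{itemize}
\item Edges inside \(V_i\) and between \(A\) and \(V_i\) get color \(i\).
\item Edges inside \(A\) get color \(1\).
\item \(G[V_i,V_j]\) is split into \(n\) perfect matchings, half colored \(a\) and half colored \(b\). Here \(C_a,C_b\) are the two cycles through \(ij\) in a family \(\{C_c\}\) with \(V(C_c)=[k]\setminus\{c\}\) covering every edge of \(K_k\) exactly twice (Lemma~\ref{lem:cycle-cover}, which needs \(k\) even). Halving the matchings needs \(n\) even.
\end{itemize}
Then every vertex outside \(A\cup V_c\) has color-\(c\) degree exactly \(n\), while \(A\cup V_c\) is a whole color-\(c\) component of order \(n+m+1\).

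Incidentally, restricting this coloring to \(V_1\cup\cdots\cup V_k\cup\{a_0\}\) with \(a_0\in A\) gives a coloring of \(K_{kn+1}\) that is \(n\)-regular in every color. So your ``Burr--Roberts core plus \(m\) vertices'' framing can work with a suitable core: one in which each color class has the clique component \(V_c\cup\{a_0\}\cong K_{n+1}\). A Walecki core, whose color classes are connected, offers no such isolation.
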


\begin{theorem}
\label{thm:subdivided-star}
Let \(k\ge4\) be even and let \(n\ge3\) be odd, with \(n\ge m\ge1\).
Let \(t=\lceil(n-m+1)/(k-1)\rceil\). If
\((n-m+1)t>m\bigl((k-1)(n-1)+1\bigr)\), then
\(r(S_n^m;k)=k(n-1)+m+2\). In particular, this equality holds
whenever \(n\ge m(k-1)^2+2m-2\).
\end{theorem}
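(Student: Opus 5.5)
We prove the two inequalities \(r(S_n^m;k)\ge k(n-1)+m+2\) and \(r(S_n^m;k)\le k(n-1)+m+2\) separately, and then check the ``in particular'' clause by direct arithmetic.

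\emph{Lower bound.} Put \(N=k(n-1)+m+1\). We need a \(k\)-edge-coloring of \(K_N\) with no monochromatic \(S_n^m\). Note that \(S_n^m\) has a center of degree \(n\), together with \(m\) of its neighbours that each have a further private neighbour. We split \(V(K_N)=A\cup B\) with \(|A|=k(n-1)+1\) and \(|B|=m\). Since \(k(n-1)\) is even and \(n-1\) is even, \(K_{k(n-1)+1}\) decomposes into \(k\) color classes that are each \((n-1)\)-regular; concretely, we take the Hamiltonian \(2\)-factorization of \(K_{k(n-1)+1}\) (which has \(k(n-1)/2\) \(2\)-factors) and give \((n-1)/2\) of the \(2\)-factors to each color. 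Inside \(A\) no color then has degree \(n\). Next we color the edges between \(B\) and \(A\), and inside \(B\), so that every vertex \(a\in A\) gains at most one edge to \(B\) in each color. Then a vertex of \(A\) of color-\(i\) degree \(n\) gets its \(n\)-th edge from \(B\), and we must forbid \(m\) disjoint extensions. Because \(k\) is even, we can pair colors and send the \(B\)-edges at each \(a\) in a color \(c\) such that the \(B\)-neighbour \(b\) has small degree in color \(c\) (at most \(m-1\) usable vertices). The aim is that any copy of a monochromatic \(S_n^m\) has a center in \(A\) and needs \(m\) subdivided branches; at most \(m-1\) of these can avoid \(B\) without an extra edge, which gives a contradiction. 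Centers in \(B\) are ruled out because the degree of each \(b\in B\) in every color is at most \(n-1\); this is achievable since \(|A|+m-1\le k(n-1)+m\) is split evenly over \(k\) colors. This construction is the step I expect to be the main obstacle. The delicate part is distributing the \(A\)--\(B\) edges, and I would try a round-robin assignment in which color \(i\) on \(b_j\) goes to a block of \(A\) of size about \((n-1)\), with the leftover vertices arranged by parity.

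\emph{Upper bound.} Take any \(k\)-coloring of \(K_N\) with \(N=k(n-1)+m+2\). Some vertex \(v\) has, in some color (say red), degree at least \(\lceil (N-1)/k\rceil\ge n\), and we argue via a counting estimate. Let \(R\) be the set of red neighbours of \(v\), with \(|R|\ge n\), and let \(W=V\setminus(R\cup\{v\})\). If at least \(m\) vertices of \(R\) have distinct red neighbours outside \(R\cup\{v\}\), then we obtain \(S_n^m\). This follows by Hall's theorem applied to the red bipartite graph \(G[R,W]\): either there is a matching of size \(m\), or red edges from \(R\) to \(W\) are covered by fewer than \(m\) vertices. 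In the latter case, consider the \(n-m+1\) or more vertices of \(R\) that have no red edges to \(W\) except into a cover of size \(<m\). These vertices send almost all their \(|W|\ge (k-1)(n-1)+1-\dots\) edges in the other \(k-1\) colors, so by pigeonhole each has some color of degree at least \(t\) into \(W\). Double counting the pairs (vertex, high-color neighbourhood) against the \(m\)-cover bound then yields the displayed inequality \((n-m+1)t>m((k-1)(n-1)+1)\). This forces a second vertex to play the role of center, with \(n\) neighbours of one color and an \(m\)-matching, and so produces a monochromatic \(S_n^m\). Equivalently, we rerun the argument with a vertex of maximum monochromatic degree and use the inequality to guarantee that the obstruction cannot recur.

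Finally, the ``in particular'' clause follows by substituting \(t\ge (n-m+1)/(k-1)\). It then suffices that \((n-m+1)^2>m(k-1)((k-1)(n-1)+1)\), which holds once \(n\ge m(k-1)^2+2m-2\); this is a routine quadratic check.
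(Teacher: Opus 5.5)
\paragraph{Lower bound.}
Your lower-bound construction does not work, and it breaks at the step you flagged as the main obstacle.

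First, the claim that every $b\in B$ can have degree at most $n-1$ in each color is false. The vertex $b$ has degree $N-1=k(n-1)+m>k(n-1)$, so some color has degree at least $n$ at $b$. Likewise, ``at most one $B$-edge of each color at every $a\in A$'' is impossible once $m>k$.

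More fundamentally, suppose each color class on $A$ is an $(n-1)$-regular union of Hamiltonian cycles. Then every $a\in A$ reaches degree $n$ in the color of any of its $B$-edges. Since that color class is connected on all of $A$, nothing in your scheme prevents the $m$ extensions. For instance, take $k=4$, $n=3$, $m=1$, so $|A|=9$ and each color is one Hamiltonian cycle of $K_9$. If $ab$ has color $c$, then $a$, its two neighbours on the color-$c$ cycle, $b$, and the next vertex along that cycle form a color-$c$ $S_3^1$.

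The missing idea is to confine the high-degree vertices of each color to a component too small to contain $S_n^m$, rather than to block extensions locally. The paper does this as follows:
\begin{itemize}
\item It takes a hub $A$ of size $m+1$ and blocks $V_1,\dots,V_k$ of size $n-1$.
\item It colors the edges inside $V_i$ and between $A$ and $V_i$ with color $i$, and the edges inside $A$ with color $1$.
\item It uses cycles $C_c$ in $K_k$ with $V(C_c)=[k]\setminus\{c\}$ that cover every edge exactly twice; this is where $k$ even is needed.
\item For each pair $i<j$, it splits the $K_{n-1,n-1}$ between $V_i$ and $V_j$ into $(n-1)/2$ perfect matchings in each of the two colors $a,b\notin\{i,j\}$ whose cycles contain $ij$.
\end{itemize}
Then every vertex outside $A\cup V_c$ has color-$c$ degree exactly $n-1$. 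Meanwhile $A\cup V_c$ is a color-$c$ component of order $n+m<|V(S_n^m)|$.

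\paragraph{Upper bound.}
Your upper bound starts like the paper's: a monochromatic $K_{1,n}$, then a K\"onig cover $U$ of size at most $m-1$. Three things need fixing.

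First, you should take exactly $n$ leaves, so that the remaining set has size $(k-1)(n-1)+m$. If $R$ is the whole red neighbourhood, $W$ can be too small for the pigeonhole step.

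Second, you misplace $t$. Pigeonhole gives each of the at least $n-m+1$ uncovered leaves a non-blue color with at least $n$ neighbours in $B'=B\setminus U$. The number $t$ counts how many such leaves share one color; it is not a degree bound.

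Third, and most importantly, the decisive argument is missing. Given $a_1,\dots,a_t$ with red $n$-sets $L_i\subseteq B'$, set $p(b)=|\{i:b\in L_i\}|$ and $D=\{b:p(b)\ge m+1\}$.
\begin{itemize}
\item If $|L_i\cap D|\ge m$ for some $i$, greedily match $m$ vertices of $L_i\cap D$ to distinct red partners among the other $a_j$, which lie outside $L_i$. This gives a red $S_n^m$ centred at $a_i$.
\item Otherwise $\sum_{b\in D}p(b)\le\min\{(m-1)t,\,t|D|\}$. Since the hypothesis forces $t>m$, this yields $nt=\sum_b p(b)\le(m-1)(t-m)+m\bigl((k-1)(n-1)+m\bigr)$, that is, $(n-m+1)t\le m\bigl((k-1)(n-1)+1\bigr)$. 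This contradicts the hypothesis.
\end{itemize}
Phrases like ``double counting against the $m$-cover bound'' or ``rerun with a vertex of maximum monochromatic degree'' do not supply this step.

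Your reduction of the final clause to $(n-m+1)^2>m(k-1)\bigl((k-1)(n-1)+1\bigr)$ is correct.
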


An edge-list assignment \(L\)
assigns a set \(L(e)\) of colors to each edge \(e\), and an
\(L\)-edge-coloring is an edge-coloring \(\varphi\) satisfying
\(\varphi(e)\in L(e)\) for every edge \(e\). A \(k\)-list assignment
has \(|L(e)|=k\) for every edge. An edge-coloring is proper if adjacent edges receive distinct colors.
We denote by \(\chi'(G)\) the least number of colors in a proper
edge-coloring of \(G\), and by \(\chi'_{\ell}(G)\) the least integer
\(q\) such that \(G\) admits a proper \(L\)-edge-coloring for every
\(q\)-list assignment \(L\). The list Ramsey number
\(r_\ell(H;k)\), introduced by Alon, Buci\'c, Kalvari, Kuperwasser,
and Szab\'o~\cite{AlonEtAl}, is the least integer \(N\) for which
there exists a \(k\)-list assignment on \(E(K_N)\) such that every
coloring from the lists contains a monochromatic copy of \(H\).
Clearly, if all lists are equal, then \(r_\ell(H;k)\le r(H;k)\). Alon et al.~\cite{AlonEtAl}
initiated the systematic study of this parameter, and Fox, He, Luo,
and Xu~\cite{FoxEtAl} subsequently studied its growth for fixed
nonbipartite graphs. For stars, Alon et al.~\cite{AlonEtAl} posed the following
conjecture.

\begin{conjecture}[\cite{AlonEtAl}]
For all \(k\ge1\) and \(n\ge1\), \(r_\ell(K_{1,n};k)=r(K_{1,n};k)\).
\end{conjecture}

The cases \(k=1\) and \(n=1\) are obvious. Alon et al.~\cite{AlonEtAl} proved
the conjecture for \(k=2\), when \(k,n\) are both even, and for
each fixed \(k\) and all sufficiently large \(n\). For odd
\(n\ge3\), the improper list edge-coloring theorem of Hilton,
Stirling, and Slivnik~\cite{HiltonStirlingSlivnik}, applied with
the positive even integer \(s=n-1\), also gives the equality. Ruotolo and Song~\cite{RuotoloSong}, using a theorem of
Schauz~\cite{Schauz}, proved the conjecture whenever \(k\) is an
odd prime. We confirm their conjecture for all \(k\ge1\) and \(n\ge3\) by a unified direct proof based on a tournament construction.
\begin{theorem}
\label{thm:list-star}
For all \(k\ge1\) and \(n\ge3\),
\(r_{\ell}(K_{1,n};k)=r(K_{1,n};k)\).
\end{theorem}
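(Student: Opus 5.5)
Since \(r_\ell(K_{1,n};k)\le r(K_{1,n};k)\) (take all lists equal to \([k]\)), it suffices to prove the lower bound. Set \(N=r(K_{1,n};k)-1\), given by Theorem~\ref{thm:star-ramsey}. I will show that for \emph{every} \(k\)-list assignment \(L\) on \(E(K_N)\) there is an \(L\)-edge-coloring in which every color class has maximum degree at most \(n-1\). The case \(k=1\) is trivial. When \(k\) and \(n\) are both even, \(N-1=k(n-1)-1\). Otherwise \(N-1=k(n-1)\), so each vertex has degree \(k(n-1)\).

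The plan is to split the capacity \(n-1\) of each color at each vertex between out-edges and in-edges of a fixed tournament. Write \(n-1=a+b\) with \(|a-b|\le1\), and orient \(K_N\) as a (near-)regular tournament \(T\), so that \(d^+(v)\) and \(d^-(v)\) differ by at most one. The goal is an \(L\)-coloring such that, for every vertex \(v\) and color \(c\), at most \(a\) out-edges and at most \(b\) in-edges of \(v\) receive color \(c\). Such a coloring gives degree at most \(n-1\) in each color.

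The local out-edge condition alone is a Hall-type matching problem. For each vertex \(v\), assign each out-edge a color from its list with each color used at most \(a\) times. This is a bipartite \(b\)-matching, and Hall's condition holds because any set \(S\) of out-edges sees at least \(k\) colors, while \(|S|\le d^+(v)\le ka\) once \(a\) is chosen as the larger half where needed. The difficulty is that the in-edges of \(v\) are out-edges of other vertices, so their colors are not controlled. To handle both constraints at once I would use a Galvin-type kernel argument. Build the auxiliary graph whose vertices are the edges of \(K_N\), and for each color \(c\) let every vertex \(v\) "prefer" its out-edges to its in-edges in a cyclic order induced by \(T\). I would then show that each color-\(c\) subproblem, namely choosing a set of edges with at most \(a\) out-edges and \(b\) in-edges at every vertex, has a "kernel" in the sense of a stable \(b\)-matching (Gale--Shapley for nonbipartite capacitated instances with tournament-acyclic preferences). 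Finally, I would run the standard list-coloring induction: give an edge color \(c\) if it lies in the color-\(c\) kernel, delete it, and reduce the lists. The counting that makes the induction close is that each remaining edge loses at most as many colors as its endpoint capacities that were filled.

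An alternative I would keep in reserve for odd \(n\) is to invoke Hilton--Stirling--Slivnik directly with \(s=n-1\). The unified proof, however, should handle the hardest case directly: \(n\) even and \(k\) odd. There \(n-1\) is odd, so \(a\ne b\), and \(N-1=k(n-1)\) forces every vertex to have \emph{exactly} \(n-1\) edges in each color. The Hall count then has no slack. I expect the main obstacle to be choosing the tournament \(T\) and the split \((a,b)\), possibly varying with \(v\) and \(c\) (e.g.\ \(a_v^c+b_v^c=n-1\) with \(\sum_c a_v^c=d^+(v)\)), so that the kernel/stable-matching step exists for every color simultaneously. My first attempt would be a rotational tournament on \(\mathbb{Z}_N\), where each vertex dominates the next \((N-1)/2\) vertices, combined with a Galvin-style induction on \(\sum_e |L(e)|\).
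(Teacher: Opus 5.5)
There is a genuine gap, and it sits where you expected the obstacle. \textbf{The tournament.} A (near-)regular tournament cannot work when $n$ is even. Take $k\ge3$ odd and $n$ even, so $N=k(n-1)+1$ is even, and give every edge the list $[k]$. Caps $a_v+b_v=n-1$ force $d^+(v)\le ka_v$ and $d^-(v)\le kb_v$. Since $d^+(v)+d^-(v)=k(n-1)$, both are equalities, so every out-degree of $T$ must be a multiple of $k$. For even $N$, a rotational tournament on $\mathbb{Z}_N$ with $(N-1)/2$ out-neighbors does not exist. A near-regular tournament has out-degrees $(k(n-1)\pm1)/2$, and neither is divisible by $k$. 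So no per-vertex split rescues it; the same computation rules it out for $k\ge4$ when $k,n$ are both even. Color-dependent caps fixed in advance do not help either. They must survive the constant assignment $L\equiv S$ for every $k$-set $S$ of colors, which in this tight case forces $\sum_{c\in S}a_v^c=d^+(v)$ for all such $S$, and hence constant caps. The missing idea is to build $T$ so that its out-degrees are multiples of $k$. The paper takes half the vertices with out-degree $k(n-2)/2$ and half with $kn/2$ (or $kn/2-1$ when $k,n$ are both even, using the slack). It obtains such a tournament from Landau's theorem (Theorem~\ref{thm:landau}) after checking the score inequalities. Each vertex then gets $(n-2)/2$ out-blocks and $n/2$ in-blocks of size at most $k$, or the reverse.

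\textbf{The coloring step.} There is no general stable $b$-matching theorem for nonbipartite capacitated instances; already stable roommates instances can fail to have a stable matching. Your kernel induction is also left unverified. No new kernel argument is needed, because your out-caps and in-caps are separate constraints. Partition the out-neighborhood of each $v$ into $a_v$ blocks and its in-neighborhood into $b_v$ blocks, each of size at most $k$, and make each block a vertex. For every arc $u\to v$, join the out-block of $u$ containing $v$ to the in-block of $v$ containing $u$. This graph is bipartite (out-blocks versus in-blocks) with maximum degree at most $k$. Galvin's theorem (Theorem~\ref{thm:galvin}) therefore gives a proper coloring from the transferred lists. Each color then appears at most once per block, hence at most $a_v+b_v=n-1$ times at $v$. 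This is exactly the paper's proof. For odd $n$, your rotational tournament with $a=b=(n-1)/2$ is the paper's choice, so only this reduction is missing. For even $n$, both the Landau-based tournament and the reduction are missing.
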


For odd \(k\), the case \(n=2\) is equivalent to
\(\chi'_{\ell}(K_{k+1})=k\), which is the List Edge-Coloring Conjecture for $K_{k+1}$. Schauz~\cite{Schauz} proved this when
\(k\) is an odd prime.
The case of general odd \(k\) remains open.

Ruotolo and Song~\cite{RuotoloSong} also considered the two-color
list Ramsey numbers of double stars and subdivided stars. For double stars with
\(n\ge3m\), their estimates determined \(r_\ell(S(n,m);2)\)
except when \(n\) is odd and \(m\ge3\). We prove the following two-color transfer theorem for every graph
of maximum degree at least two, and obtain the exact value of
\(r_\ell(S(n,m);2)\) for all \(n\ge3m\).
\begin{theorem}
\label{thm:two-color-transfer}
\textup{(i)} Let \(H\) be a graph with \(d=\Delta(H)\geq2\). Then $r_{\ell}(H;2)\geq\min\{r(H;2),2d\}$.
In particular, if \(r(H;2)\leq2d\), then
\(r_{\ell}(H;2)=r(H;2)\).

\noindent
\textup{(ii)} Let \(n\geq m\geq1\) and \(n\geq3m\). Then
\[
 r_{\ell}(S(n,m);2)=r(S(n,m);2)
 =
 \begin{cases}
  2n+1,&\text{if \(n\) is odd and \(m\leq2\)},\\
  2n+2,&\text{otherwise}.
 \end{cases}
\]
\end{theorem}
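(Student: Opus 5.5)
The plan for \textup{(i)} is to show that for every \(N<\min\{r(H;2),2d\}\) and every \(2\)-list assignment \(L\) on \(E(K_N)\), there is an \(L\)-edge-coloring with no monochromatic \(H\). Since \(r_\ell(H;2)\le r(H;2)\) always holds (take all lists equal), the ``in particular'' part then follows at once. We split into two cases.

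\emph{Case 1: all lists equal some \(\{a,b\}\).} Because \(N<r(H;2)\), there is a \(2\)-coloring of \(K_N\) with no monochromatic \(H\). We rename its colors to \(a,b\) and are done.

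\emph{Case 2: the lists are not all equal.} Here we aim for a stronger conclusion: an \(L\)-coloring in which every color class has maximum degree at most \(d-1\). Such a class cannot contain \(H\), since \(\Delta(H)=d\). Note that \(N\le 2d-1\) gives \(N-1\le 2(d-1)\) edges at each vertex, so the target is numerically tight only at a vertex whose edges all carry the same two colors.

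For this case I would choose an \(L\)-coloring minimizing
\[
\Phi=\sum_{v}\sum_{c}\deg_c(v)^2 .
\]
Local optimality says the following. For every edge \(uv\) colored \(c\) whose other list color is \(c'\),
\[
\deg_c(u)+\deg_c(v)\le \deg_{c'}(u)+\deg_{c'}(v)+2 .
\]
Now suppose \(\deg_c(v)\ge d\). Then \(\deg_{c'}(v)\le N-1-\deg_c(v)\le d-2\) for every other color \(c'\). Substituting into the optimality inequality for each \(c\)-edge \(vu\) gives \(\deg_c(u)\le \deg_{c'}(u)\). So every \(c\)-neighbour \(u\) of \(v\) is also ``balanced at the limit.''

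I would then propagate this along the \(c\)-edges to get a closed set of vertices. I expect to show that on this set every edge has list exactly \(\{c,c'\}\), for one fixed pair. In that situation I would apply a variant of Case 1 to the subconfiguration: recolor that block with a Ramsey coloring, or alternatively exploit an edge outside the block whose list differs, and obtain a strict decrease of \(\Phi\), a contradiction.

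\textbf{Main obstacle.} The hard step is precisely this interaction between the local-optimality argument and the Ramsey case. When the lists are ``locally equal'' around a vertex, defect-\((d-1)\) colorings genuinely may not exist, since for \(N=2d-1\) this is the star Ramsey threshold. The proof must then fall back on the hypothesis \(N<r(H;2)\) rather than on degree bounds.

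My first attempt would be a hybrid. Fix a \(2\)-coloring \(\psi\) of \(K_N\) in colors \(\{1,2\}\) with no monochromatic \(H\). Then, for each edge, pick a color of \(L(e)\) consistently with \(\psi\): map \(1\) to the ``smaller'' and \(2\) to the ``larger'' color of \(L(e)\) under a fixed global order of the colors. A monochromatic \(H\) in color \(c\) would then be contained in \(\psi^{-1}(1)\cup\psi^{-1}(2)\) restricted to edges whose lists contain \(c\). I would try to control those mixed copies by the degree bound \(N-1\le 2d-2\), using the potential argument above.

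For \textup{(ii)}, take \(H=S(n,m)\), so that \(d=\Delta(H)=n+1\) and \(2d=2n+2\). By the Grossman--Harary--Klawe result for \(n\ge3m\), \(r(S(n,m);2)\) equals \(2n+1\) when \(n\) is odd and \(m\le2\), and \(2n+2\) otherwise. This is at most \(2d\) in every case. Hence \textup{(i)} gives \(r_\ell(S(n,m);2)=r(S(n,m);2)\) with the stated values.
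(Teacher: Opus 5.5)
Your Case 1 and your derivation of (ii) from (i) are correct and match the paper. The gap is Case 2, which carries all of the content of (i): you never actually produce an $L$-coloring in which every color has degree at most $d-1$.

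\textbf{Why the potential argument stalls.} Single-edge optimality of $\Phi$ gives only the following: for a $c$-edge $vu$ with $\deg_c(v)\ge d$ and alternative color $c'\in L(uv)$, we have $\deg_c(u)\le\deg_{c'}(u)$. This is fully consistent with $\deg_c(v)=d$; it only says the $c$-neighbours of $v$ are $c'$-heavy. It is also not the hypothesis $\deg_c(u)\ge d$ that a propagation would need. Moreover $c'$ depends on the edge, so nothing forces a ``closed set'' on which all lists equal one fixed pair. Your ``I expect to show'' step is therefore unsupported. Even granting it, recoloring a block with a Ramsey coloring neither controls $\Phi$ nor prevents a monochromatic $H$ that uses edges both inside and outside the block.

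\textbf{Why the hybrid fails.} A color-$c$ copy of $H$ may use edges where $c$ is the smaller list color (coming from $\psi^{-1}(1)$) together with edges where $c$ is the larger one (coming from $\psi^{-1}(2)$). So $H$-freeness of $\psi$ says nothing about it.

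\textbf{The missing idea: an Euler circuit.} Every vertex of $K_{2d-1}$ has even degree $2d-2$, so $K_{2d-1}$ is Eulerian. In your Case 2, $L$ is already nonconstant and $N\le 2d-1$. Embed $K_N$ in $K_{2d-1}$ and extend $L$ arbitrarily to a $2$-list assignment there; it stays nonconstant.
\begin{itemize}
\item List the edges $e_1,\dots,e_M$ along an Euler circuit, rotated so that two consecutive edges satisfy $L(e_M)\neq L(e_1)$.
\item Choose $\varphi(e_1)\in L(e_1)\setminus L(e_M)$, and then greedily $\varphi(e_i)\in L(e_i)\setminus\{\varphi(e_{i-1})\}$ for $i\ge2$.
\item Cyclically consecutive edges, including the wrap-around pair $e_M,e_1$, then receive different colors.
\item The circuit pairs the $2d-2$ edges at each vertex into $d-1$ pairs of distinct colors, so each color has degree at most $d-1$ everywhere.
\end{itemize}
Restricting to $K_N$ gives an $L$-coloring with no monochromatic $H$, since $\Delta(H)=d$. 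This is exactly the paper's Euler-tour balancing lemma and how it is used in the proof of (i). The paper also uses alternation for constant lists with $|E(K_{2d-1})|$ even, but your Case 1 already covers that situation.
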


For subdivided stars, Ruotolo and Song~\cite{RuotoloSong} settled the case
\(m=1\) and proved the two-color Ramsey and list Ramsey equalities
for \(m\in\{2,3\}\) under explicit bounds on \(n\). They further suggested that
\(r_\ell(S_n^m;2)=r(S_n^m;2)\) for every fixed \(m\ge4\)
and all sufficiently large \(n\).
We determine the common value
under explicit bounds on \(n\).
\begin{theorem}
\label{thm:two-color-subdivided}
Let \(m,n\) be integers with \(m\geq4\), and let
\(\varepsilon=0\) if \(n\) is odd and \(\varepsilon=1\) if \(n\)
is even. Suppose that $n\geq5m-7+2\varepsilon$ and $(2-\varepsilon)n\geq m^2-2$. Then $r_{\ell}(S_n^m;2)
 =r(S_n^m;2)
 =r(K_{1,n};2)
 =2n-\varepsilon$.
\end{theorem}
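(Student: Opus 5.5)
The plan is to establish the chain of inequalities
\[
 2n-\varepsilon=r(K_{1,n};2)\le r_\ell(S_n^m;2)\le r(S_n^m;2)\le 2n-\varepsilon .
\]
The first equality is Theorem~\ref{thm:star-ramsey} with \(k=2\): the value is \(2(n-1)+1=2n-1\) when \(n\) is even and \(2n\) when \(n\) is odd, which is \(2n-\varepsilon\) in both cases. The middle inequality \(r_\ell\le r\) holds by taking all lists equal to \(\{1,2\}\).

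For the lower bound \(r_\ell(S_n^m;2)\ge 2n-\varepsilon\) I would use Theorem~\ref{thm:two-color-transfer}(i) with \(H=S_n^m\), for which \(d=\Delta(H)=n\ge 2\). This gives \(r_\ell(S_n^m;2)\ge\min\{r(S_n^m;2),2n\}\). Since \(S_n^m\) contains \(K_{1,n}\), we have \(r(S_n^m;2)\ge r(K_{1,n};2)=2n-\varepsilon\), and also \(2n\ge 2n-\varepsilon\). So the minimum is at least \(2n-\varepsilon\), as needed. An alternative route is the \(k=2\) case of Theorem~\ref{thm:list-star}, using \(K_{1,n}\subseteq S_n^m\) to get \(r_\ell(S_n^m;2)\ge r_\ell(K_{1,n};2)=r(K_{1,n};2)\).

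The main work is the upper bound \(r(S_n^m;2)\le 2n-\varepsilon\). Set \(N=2n-\varepsilon\) and take any red/blue colouring of \(K_N\). Each vertex has degree \(N-1\), which is \(2n-1\) or \(2n-2\). By Theorem~\ref{thm:star-ramsey} (for even \(n\) this is the parity argument that the red degrees cannot all be exactly \(n-1\)), some vertex \(v\) has at least \(n\) incident edges of one colour, say red. Let \(R\) be its red neighbourhood, so \(|R|\ge n\), and let \(B\) be the rest, so \(|B|=N-1-|R|\le n-1-\varepsilon\). To embed \(S_n^m\) centred at \(v\), I need \(m\) distinct vertices \(x_1,\dots,x_m\in R\), together with distinct private neighbours \(y_i\notin\{v,x_1,\dots,x_m\}\) with each \(x_iy_i\) red, and \(n-m\) further vertices of \(R\) as leaves. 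Counting shows there are always enough leftover vertices in \(R\). So the task reduces to finding a red matching of size \(m\) from \(R\) into \(V\setminus\{v\}\) in which the matched vertices of \(R\) and their partners stay disjoint; call this a red "\(m\)-fan".

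If no such structure exists, then a maximum red "fan" system from \(R\) has size less than \(m\). A König/Hall-type argument then shows that, after deleting a set \(T\) of fewer than about \(2m\) vertices, every vertex of \(R\setminus T\) has almost all its edges blue, both inside \(R\) and to \(B\). In particular, a vertex \(u\in R\setminus T\) has blue degree at least \(N-1-O(m)\), and any blue \(n\)-set around \(u\) consists mostly of vertices that are themselves highly blue. I would then switch to blue and embed \(S_n^m\) centred at \(u\). Each blue leaf \(x\) of \(u\) lying in \(R\setminus T\) has many blue neighbours, so the \(m\) subdivided paths can be chosen greedily. This needs at each step roughly \(n+2m\le N-1-(\text{blue-deficient vertices})\). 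The deficiency is bounded by the number of red edges touching \(R\setminus T\), which is at most \(|T|\cdot\) something, and bounding it is where the two hypotheses come in: \(n\ge 5m-7+2\varepsilon\) ensures that the greedy linear count closes, and \((2-\varepsilon)n\ge m^2-2\) bounds a quadratic count of red edges incident to the exceptional set \(T\), which has size up to \(m-1\) with each vertex of degree up to about \(m\).

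I expect this extremal case to be the main obstacle: making the "few red edges from \(R\)" structure precise and verifying that both inequalities suffice, especially the parity case \(\varepsilon=1\), where \(N-1=2n-2\) leaves no slack. If the greedy blue embedding fails, I would instead pick the vertex \(v\) of maximum red degree at the start, which gives stronger degree information about \(R\).
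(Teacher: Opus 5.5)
Your lower bound is correct. Theorem~\ref{thm:two-color-transfer}(i) with \(d=n\) works, and your alternative via Theorem~\ref{thm:list-star} and \(K_{1,n}\subseteq S_n^m\) is exactly the paper's argument.

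The gap is the upper bound \(r(S_n^m;2)\le 2n-\varepsilon\), which is the substance of the theorem and which you only sketch. Your reduction is already wrong. The \(n-m\) plain leaves must avoid the partners \(y_i\), so if \(|R|=n+s\) with \(s<m\), at most \(s\) partners may lie in \(R\), and if \(|R|=n\) all of them must lie in \(B\). The claim ``there are always enough leftover vertices in \(R\)'' is therefore false. Consequently, the absence of a red \(S_n^m\) centred at \(v\) tells you only that the red \(R\)--\(B\) edges have a vertex cover of size at most \(m-1\) (K\H{o}nig on the red bipartite graph between \(R\) and \(B\)). It says nothing about the colours inside \(R\). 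So your assertion that every vertex of \(R\setminus T\) has almost all its edges blue does not follow, and it is false in general. Take \(|R|=n\), let \(R\cup\{v\}\) be a red clique, and make all \(R\)--\(B\) edges blue. Then each \(u\in R\) has blue degree at most \(|B|=n-1-\varepsilon<n\). No blue \(S_n^m\) can be centred there, so your greedy blue embedding has nothing to start from. The monochromatic copy must be found elsewhere (here, a blue one centred in \(B\)). Doing this in general requires a case analysis that the proposal does not contain, and your account of how \(n\ge5m-7+2\varepsilon\) and \((2-\varepsilon)n\ge m^2-2\) enter is a guess rather than a derivation.

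The idea you are missing is the paper's induction on the number of subdivided edges. By Theorem~\ref{thm:ghk-double-star}, every \(2\)-colouring of \(K_{2n-\varepsilon}\) contains a monochromatic \(S_n^1=S(n-1,1)\). Lemma~\ref{lem:one-step-subdivision} then upgrades a monochromatic \(S_n^{q-1}\) to a monochromatic \(S_n^q\) for \(2\le q\le m\). Having the red \(S_n^{q-1}\) in hand is what fixes the structure:
\begin{itemize}
\item Let \(A\) be its \(n-q+1\) unsubdivided leaves and \(B\) the \(n-q-\varepsilon\) outside vertices; all \(A\)--\(B\) edges are then blue.
\item The blue-degree bounds \eqref{eq:local-B} and \eqref{eq:local-A} hold, since otherwise a blue \(S_n^q\) centred in \(B\) or \(A\) appears; this uses \(n\ge3q+\varepsilon\).
\item K\H{o}nig applied to the red bipartite graph between \(Y\) and \(Z\cup B\) gives a cover of size \(q-1\), which splits \([q-1]\) into \(I\) and \(J\).
\item Double counting red edges from \(P=Z_J\cup Y_I\) to \(A\cup B\), together with \((2-\varepsilon)n\ge q^2-2\), yields \(p\in P\) with \(d_R(p,A\cup B)\ge n-\eta(p)\).
\item Lemma~\ref{lem:extension}, which is where \(n\ge5q-7+2\varepsilon\) is used, then builds a red \(S_n^q\) centred at \(p\).
\end{itemize}
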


For further results and references
on this topic, we refer the readers to \cite{DasReedSkokan,HuEtAlBookCycle,Radziszowski}.

\section{Multicolor Ramsey numbers}
In this section, we first list two theorems that will be used in later proofs.
\begin{theorem}[K\H{o}nig's theorem~\cite{Konig}]
\label{thm:konig}
In every bipartite graph, the maximum size of a matching equals
the minimum size of a vertex cover.
\end{theorem}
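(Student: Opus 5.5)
König's theorem is classical and the paper only cites it, but I would prove it directly with the alternating-path argument. Let \(G\) be bipartite with parts \(A\) and \(B\). Write \(\nu(G)\) for the maximum size of a matching and \(\tau(G)\) for the minimum size of a vertex cover.

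The inequality \(\nu(G)\le\tau(G)\) is immediate. Every vertex cover must contain at least one endpoint of each edge of a matching, and these endpoints are distinct because matching edges are pairwise disjoint. So the whole task is to exhibit a vertex cover of size \(|M|\) for a maximum matching \(M\).

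Fix a maximum matching \(M\). Let \(U\subseteq A\) be the set of vertices of \(A\) not covered by \(M\). Let \(Z\) be the set of vertices reachable from \(U\) by \(M\)-alternating paths, that is, paths that start in \(U\), leave \(A\) along non-matching edges and return to \(A\) along matching edges; \(U\) itself lies in \(Z\). Define
\[
C=(A\setminus Z)\cup(B\cap Z).
\]
Three steps then finish the proof.

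\textbf{Every vertex of \(B\cap Z\) is matched, and its partner lies in \(A\cap Z\).} If some \(b\in B\cap Z\) were unmatched, the alternating path from \(U\) to \(b\) would be \(M\)-augmenting. Flipping it would give a larger matching, contradicting maximality. This is the one place where maximality is used, and it is the crux of the argument. If \(b\) is matched, extending the alternating path by the matching edge at \(b\) puts its partner in \(A\cap Z\).

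\textbf{\(C\) is a vertex cover.} Take an edge \(ab\) with \(a\in A\) and \(b\in B\). If \(a\notin Z\), then \(a\in C\). If \(a\in Z\), there are two cases. When \(ab\notin M\), the alternating path to \(a\) ends in \(A\), so it ends with a matching edge or is trivial, and it extends along \(ab\); hence \(b\in Z\). When \(ab\in M\), the vertex \(a\) is matched, so \(a\notin U\), and the alternating path reaching \(a\) must arrive through its matching edge, i.e.\ through \(b\); hence again \(b\in Z\). In both cases \(b\in B\cap Z\subseteq C\).

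\textbf{\(|C|\le|M|\).} Every vertex of \(A\setminus Z\) is matched, since \(U\subseteq Z\), and by the first step every vertex of \(B\cap Z\) is matched. Moreover, no single matching edge has both endpoints in \(C\). If \(ab\in M\) with \(b\in Z\), then \(a\in Z\) by the first step, so \(a\notin A\setminus Z\). Hence assigning to each vertex of \(C\) its matching edge is an injection from \(C\) into \(M\), and \(|C|\le|M|\).

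Combining the three steps with the trivial inequality gives \(\tau(G)\le|C|\le|M|=\nu(G)\le\tau(G)\), so equality holds throughout.
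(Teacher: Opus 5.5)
Your proof is correct and complete. It is the standard alternating-path argument, and maximality of $M$ is used exactly once, to rule out an unmatched vertex in $B\cap Z$. One detail is left unstated: in the vertex-cover step, $b$ might already lie on the alternating path to $a$. This is harmless, because the prefix of that path ending at $b$ enters $b$ along a non-matching edge and so already witnesses $b\in Z$.

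The paper gives no proof of its own here; it only cites K\H{o}nig. It uses only the direction your construction supplies, namely that a bipartite graph with no matching of size $m$ has a vertex cover with at most $m-1$ vertices (in Lemmas~\ref{lem:subdivided-upper} and~\ref{lem:one-step-subdivision}). Your argument would therefore make that dependence self-contained.
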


\begin{theorem}[Ruotolo and
Song~\cite{RuotoloSong}]
\label{thm:ruotolo-song-upper}
Let \(k\ge2\) and \(n\ge m\ge1\) be integers. If
\((n+1)\left\lceil\frac{n+1}{k-1}\right\rceil
>m((k-1)n+m)\), then
\(r(S(n,m);k)\le kn+m+2\).
\end{theorem}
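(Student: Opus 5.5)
The plan is to argue by contradiction. Set \(N=kn+m+2\) and fix a \(k\)-edge-coloring of \(K_N\) with no monochromatic \(S(n,m)\). Every vertex has degree \(kn+m+1>kn\), so by pigeonhole each vertex \(u\) has a color \(c\) with \(d_c(u)\ge n+1\). Call \(u\) \emph{big in color \(c\)} when this holds. The basic embedding fact is the following. An edge \(uv\) of color \(c\) extends to a copy of \(S(n,m)\) in color \(c\) centered at \(u\) and \(v\) whenever \(d_c(u)-1\ge n\), \(d_c(v)-1\ge m\), and
\[
\bigl|(N_c(u)\cup N_c(v))\setminus\{u,v\}\bigr|\ge n+m .
\]
To see this, take the \(m\) leaves of \(v\) first, preferring \(N_c(v)\setminus N_c[u]\), and then \(n\) leaves of \(u\) from what remains.

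First, fix \(u\) big in color \(c\) and set \(A=N_c(u)\), so \(|A|\ge n+1\); shrinking \(A\) if necessary, take \(|A|=n+1\). I intend to show that for every \(v\in A\), the absence of a monochromatic \(S(n,m)\) forces \(v\) to have few \(c\)-neighbours outside \(N_c[u]\). In particular, if \(d_c(v)\ge m+1\), then \(v\) has at most \(m-1\) such neighbours. The same reasoning applied with the roles of \(u\) and \(v\) exchanged bounds the \(c\)-edges leaving \(A\). Consequently the \(c\)-edges incident to the small side (vertices \(w\) with \(d_c(w)\le m\), or the part of \(V\setminus N_c[u]\) they reach) total at most \(m\bigl((k-1)n+m\bigr)\). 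Here the factor \((k-1)n+m\) is the number of vertices outside \(N_c[u]\) when \(|N_c[u]|=n+2\), namely \(N-(n+2)\), and each of these can receive at most \(m\) edges of color \(c\) from the relevant set.

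Second, I need a matching lower bound. Each \(v\in A\) has at least \(N-1-d_c(v)\) non-\(c\) edges. Inside \(A\), the edges among the \(n+1\) vertices that do not have color \(c\) are spread over the remaining \(k-1\) colors. Averaging over these colors gives, for each \(v\in A\), some color \(c'\ne c\) with at least \(\lceil (n+1)/(k-1)\rceil\) of the relevant edges; this is the step that should produce the term \(\lceil (n+1)/(k-1)\rceil\). Summing over the \(n+1\) vertices of \(A\) should give at least \((n+1)\lceil (n+1)/(k-1)\rceil\) edges of a type that the first step bounds by \(m\bigl((k-1)n+m\bigr)\). The hypothesis \((n+1)\lceil\frac{n+1}{k-1}\rceil>m\bigl((k-1)n+m\bigr)\) then yields the contradiction.

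The main obstacle is setting up this double count so that the quantity bounded below in the second step is literally the same as the quantity bounded above in the first step. Concretely, I must decide whether to count \(c\)-edges from \(A\) to the outside or monochromatic edges in another color \(c'\) forced by degree deficiency, and then choose \(u\) and \(c\) extremally, for instance with \(d_c(u)\) maximum over all vertices and colors. That extremal choice is what makes every vertex of \(A\) \enquote{small} in the right sense, and hence what allows the per-vertex bound of \(m\) in the first step. The other steps (pigeonhole, the embedding criterion, the averaging over \(k-1\) colors) are routine.
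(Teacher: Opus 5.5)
Your setup is sound as far as it goes: pigeonhole gives a color-$c$ star at $u$ with leaf set $A$, $|A|=n+1$. Each $v\in A$ then has at most $m-1$ color-$c$ neighbours in $B:=V\setminus(A\cup\{u\})$, where $|B|=(k-1)n+m$; otherwise $u$ with leaves $A\setminus\{v\}$ and $v$ with $m$ leaves in $B$ form the double star. But the core of the proof is missing, as your ``main obstacle'' paragraph concedes, and the lower-bound step you sketch would fail. Averaging the edges at $v$ inside $A$ cannot yield $\lceil (n+1)/(k-1)\rceil$ edges of one color $c'\ne c$. Such $v$ has only $n$ edges inside $A$, and they may all have color $c$, since a color-$c$ clique on $A\cup\{u\}$ contains no $S(n,m)$. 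Moreover, your $c'$ varies with $v$, so summing over $A$ gives no single-color quantity. Your upper bound concerns color-$c$ edges and is not established anyway: nothing you prove stops a vertex of $B$ from having many color-$c$ neighbours in $A$.

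The missing idea is that $\lceil (n+1)/(k-1)\rceil$ counts vertices, not edges.

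\begin{itemize}
\item Since each $a\in A$ has at most $m-1$ color-$c$ neighbours in $B$, it has at least $(k-1)n+1$ neighbours in $B$ of the other $k-1$ colors. Hence there is a color $c_a\ne c$ and a set $L_a\subseteq B$ of $c_a$-neighbours of $a$ with $|L_a|=n+1$.
\item Pigeonholing the $n+1$ vertices of $A$ by $c_a$ gives $t=\lceil(n+1)/(k-1)\rceil$ vertices $a_1,\dots,a_t$ sharing a color, say red, with red sets $L_1,\dots,L_t\subseteq B$.
\item For $b\in B$ let $p(b)=|\{i:b\in L_i\}|$. Suppose $p(b)\ge m+1$ and $b\in L_i$. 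Then $a_i$ with leaves $L_i\setminus\{b\}$ and $b$ with $m$ leaves among the other $a_j$ form a red $S(n,m)$. The two leaf sets are disjoint because $A\cap B=\varnothing$.
\item Hence $p(b)\le m$ for all $b$, so $(n+1)t=\sum_{b\in B}p(b)\le m|B|=m\bigl((k-1)n+m\bigr)$, contradicting the hypothesis.
\end{itemize}

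No extremal choice of $u$ or $c$ is needed. The paper only cites this theorem, but its proof of Lemma~\ref{lem:subdivided-upper} follows exactly this template.
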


We next prove the following two useful lemmas.
\begin{lemma}\label{lem:cycle-cover}
Let \(k\ge4\) be even. There exist cycles \(C_c\), \(c\in[k]\), in
\(K_k\) such that \(V(C_c)=[k]\setminus\{c\}\) for every
\(c\in[k]\), and every edge of \(K_k\) belongs to exactly two of
these cycles.
\end{lemma}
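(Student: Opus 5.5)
We label the vertices of \(K_k\) as \(\mathbb{Z}_{k-1}\cup\{\infty\}\), which is possible since \(k-1\) is odd, and build all the cycles cyclically. The vertex \(\infty\) is the one omitted by \(C_\infty\). For the remaining \(k-1\) cycles we fix one base cycle \(C_0\) avoiding \(0\) and set \(C_i=C_0+i\) for \(i\in\mathbb{Z}_{k-1}\), with \(\infty+i=\infty\). Then \(C_i\) has vertex set \((\mathbb{Z}_{k-1}\setminus\{i\})\cup\{\infty\}\), as required. The lemma is then reduced to a condition on differences.

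\emph{Choice of \(C_\infty\).} Take \(C_\infty\) to be the Hamiltonian cycle \(0,1,2,\dots,k-2,0\) on \(\mathbb{Z}_{k-1}\). It contains every edge of difference \(\pm1\) exactly once and no other edges.

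\emph{Choice of \(C_0\).} Let \(C_0=\infty,P,\infty\), where \(P\) is the zigzag Hamiltonian path
\[
P:\ 1,\ k-2,\ 2,\ k-3,\ 3,\ \dots
\]
on \(\{1,\dots,k-2\}\). The consecutive differences along \(P\) are \(k-3,k-4,\dots,1\) in absolute value. Reduced to classes \(\pm d\) modulo \(k-1\), these are the values \(2,3,\dots,k-2\), one each. Since \(d\) and \(k-1-d\) are the same class, every class \(d\in\{2,\dots,(k-2)/2\}\) occurs exactly twice, and the class \(1\) occurs exactly once (via \(k-2\equiv-1\)). The edge count is consistent: \(P\) has \(k-3=1+2\bigl(\tfrac{k-2}{2}-1\bigr)\) edges. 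For \(k=4\) the path is simply \(1,2\).

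\emph{Verification.} Consider first an edge \(\{a,b\}\subseteq\mathbb{Z}_{k-1}\) of difference class \(d\). Each edge of class \(d\) in \(P\) lies in exactly one translate \(C_i\) that contains \(\{a,b\}\). Indeed, the two orientations give the two candidate shifts, and exactly one of them matches because \(k-1\) is odd, so \(d\not\equiv-d\). Hence \(\{a,b\}\) lies in exactly two cycles: for \(d\ge2\), in two translates; for \(d=1\), in one translate and in \(C_\infty\).

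Now consider an edge \(\{\infty,a\}\). The \(\infty\)-edges of \(C_0\) go to the endpoints \(x\ne y\) of \(P\), so \(\{\infty,a\}\) lies in exactly \(C_{a-x}\) and \(C_{a-y}\), two distinct cycles.

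The only real obstacle is finding a path whose difference multiset is exactly ``class \(1\) once, every other class twice''. The zigzag path settles this, so what remains is the routine check of the difference bookkeeping modulo \(k-1\).
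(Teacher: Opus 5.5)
Your proof is correct and is essentially the paper's argument: the same labeling $\mathbb{Z}_{k-1}\cup\{\infty\}$, the same $C_\infty$, and a base cycle through $\infty$ along the zigzag $1,-1,2,-2,\ldots$ (the mirror image of the paper's $-1,1,-2,2,\ldots$). You verify it by the same difference-class count and the same observation that, since $k-1$ is odd, exactly one translation carries a given edge of class $d$ onto a given pair. One small wording slip: the integer differences along $P$ are $1,\ldots,k-3$, and it is their negatives modulo $k-1$ that are $2,\ldots,k-2$, but your resulting count (class $1$ once, every other class twice) is right either way.
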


\begin{proof}
Let \(r=(k-2)/2\) and \(q=k-1=2r+1\). Relabel the vertices of
\(K_k\) as \(\Omega=\mathbb Z_q\cup\{\infty\}\), with arithmetic
modulo \(q\). Define \(C_\infty=(0,1,\ldots,2r)\). For each
\(t\in\mathbb Z_q\), let \(v_{2s-1}(t)=t-s\) and
\(v_{2s}(t)=t+s\) for \(1\le s\le r\), and define
\(C_t=(\infty,v_1(t),\ldots,v_{2r}(t),\infty)\). Then
\(V(C_c)=\Omega\setminus\{c\}\) for every \(c\in\Omega\).
Each edge \(\infty x\) belongs to exactly \(C_{x+1}\) and
\(C_{x-r}\).

For finite vertices \(x,y\), define their cyclic distance to be the
unique \(d\in\{1,\ldots,r\}\) with \(y-x\equiv\pm d\pmod q\).
The finite edges of \(C_0\) are \(\{-s,s\}\), \(1\le s\le r\),
and \(\{s,-(s+1)\}\), \(1\le s\le r-1\). Since \(q\) is odd,
multiplication by \(2\) permutes the nonzero residues modulo \(q\)
up to sign. Thus the first family contains one edge of each cyclic
distance. The residues \(2s+1\), \(1\le s\le r-1\), are distinct
up to sign and avoid \(\pm1\), so the second family contains one
edge of each distance \(2,\ldots,r\).

For each \(d\in\{1,\ldots,r\}\), translating an edge of cyclic
distance \(d\) by every element of \(\mathbb Z_q\) gives all \(q\)
edges of that distance exactly once. Indeed, a nonzero translation
fixing an unordered pair would interchange its endpoints and imply
\(2d\equiv0\pmod q\), a contradiction. Hence each finite edge of
distance \(2,\ldots,r\) belongs to exactly two of the cycles
\(C_t\), \(t\in\mathbb Z_q\), and each edge of distance \(1\)
belongs to exactly one. The latter edges form \(C_\infty\).
Relabeling \(\Omega\) as \([k]\) completes the proof.
\end{proof}

For a colored graph, let \(d_c(x)\) denote
the number of edges of color \(c\) incident with \(x\).

\begin{lemma}\label{lem:even-coloring}
Let \(k\ge4\) and \(s\ge2\) be even integers, and let \(h\ge2\).
There exists a \(k\)-edge-coloring of \(K_{ks+h}\) such that, for
each color \(c\), all vertices of color-\(c\) degree at least
\(s+1\) belong to a color-\(c\) component of order \(s+h\).
\end{lemma}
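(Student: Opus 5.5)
The plan is a blow-up of the cycle system from Lemma~\ref{lem:cycle-cover}, with one extra block glued into every color. Partition the vertex set of \(K_{ks+h}\) as \(V_1\cup\cdots\cup V_k\cup W\), where \(|V_a|=s\) for every \(a\in[k]\) and \(|W|=h\). We aim for a coloring in which \(W\cup V_c\) is exactly a color-\(c\) component for every \(c\). Every vertex outside \(W\cup V_c\) should have color-\(c\) degree at most \(s\).

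The coloring has three parts.
\begin{enumerate}
\item All edges inside \(V_c\), and all edges between \(V_c\) and \(W\), receive color \(c\).
\item All edges inside \(W\) receive color \(1\).
\item For the edges between distinct blocks \(V_a,V_b\), we use Lemma~\ref{lem:cycle-cover}, which is available since \(k\ge4\) is even. We identify block indices with vertices of \(K_k\) and take the cycles \(C_c\). The edge \(ab\) of \(K_k\) lies in exactly two cycles \(C_c,C_{c'}\), and necessarily \(c,c'\notin\{a,b\}\) because \(V(C_c)=[k]\setminus\{c\}\). Since \(s\) is even, \(K_{s,s}\) decomposes into \(s\) perfect matchings. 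We split the bipartite graph \(G[V_a,V_b]\cong K_{s,s}\) into two \(s/2\)-regular spanning subgraphs and color one with \(c\) and the other with \(c'\).
\end{enumerate}

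Next we verify the degrees. Fix \(a\in[k]\) and a vertex \(x\in V_a\).
\begin{itemize}
\item For each color \(c\neq a\), vertex \(a\) lies on \(C_c\) and has exactly two neighbours \(b,b'\) on it. Hence \(x\) receives \(s/2\) color-\(c\) edges toward each of \(V_b\) and \(V_{b'}\), giving \(d_c(x)=s\).
\item In color \(a\), the vertex \(x\) has only the edges inside \(V_a\cup W\). This is because \(a\notin V(C_a)\), so no edge between blocks at \(V_a\) gets color \(a\).
\item A vertex \(w\in W\) has \(d_c(w)=s\) for \(c\neq1\), and \(d_1(w)=s+h-1\).
\end{itemize}
It follows that a vertex of color-\(c\) degree at least \(s+1\) lies in \(V_c\cup W\). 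Indeed, a vertex of \(V_a\) with \(a\neq c\) has color-\(c\) degree exactly \(s\). A vertex of \(W\) exceeds \(s\) only in color \(1\), and \(W\subseteq V_1\cup W\).

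It remains to check that \(V_c\cup W\) is a whole color-\(c\) component, of order \(s+h\).
\begin{itemize}
\item It is connected in color \(c\), since \(V_c\) is complete to \(W\) in color \(c\) and \(s,h\ge1\).
\item No color-\(c\) edge leaves it. Edges between blocks at \(V_c\) never get color \(c\), since \(c\notin V(C_c)\). The edges at \(W\) other than those to \(V_c\) go to other blocks \(V_{c''}\) and have color \(c''\neq c\).
\item For \(c=1\), the extra edges inside \(W\) stay within \(V_1\cup W\), so the same argument applies.
\end{itemize}

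The only delicate step is ensuring that the edges between blocks never connect \(V_c\cup W\) to the rest in color \(c\). This is exactly what the condition \(V(C_c)=[k]\setminus\{c\}\) in Lemma~\ref{lem:cycle-cover} provides. The "exactly two cycles" property is what makes the color-\(c\) degree outside \(V_c\cup W\) equal to exactly \(s\), via the \(s/2\)-regular splitting that needs \(s\) even.
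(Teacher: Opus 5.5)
Your proposal is correct and is essentially the paper's proof: the same partition into \(h\)-set plus \(k\) blocks of size \(s\), the same coloring of the edges inside \(A\cup V_c\) and inside \(A\), and the same splitting of each \(G[V_a,V_b]\) into two \(s/2\)-regular halves colored by the two cycles of Lemma~\ref{lem:cycle-cover} through \(ab\). The verification also matches: \(d_c(x)=s\) for \(x\in V_a\), \(a\ne c\), and \(V_c\cup W\) is closed in color \(c\) because \(c\notin V(C_c)\). The one step you leave implicit is also implicit in the paper: \(x\) gets no color-\(c\) edges toward blocks \(V_{b''}\) with \(ab''\notin E(C_c)\), since those edges receive only the colors of the two cycles that contain \(ab''\).
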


\begin{proof}
Let \(G=K_{ks+h}\), and partition \(V(G)\) into disjoint sets
\(A,V_1,\ldots,V_k\), where \(|A|=h\) and \(|V_i|=s\) for each
\(i\in[k]\). Color all edges inside \(V_i\) and between \(A\)
and \(V_i\) with color \(i\), and color all edges inside \(A\)
with color \(1\).

Let \(C_1,\ldots,C_k\) be the cycles in Lemma~\ref{lem:cycle-cover}.
For \(1\le i<j\le k\), let \(a,b\) be the two distinct indices
with \(ij\in E(C_a)\cap E(C_b)\). Then \(a,b\notin\{i,j\}\).
Decompose \(G[V_i,V_j]\) into \(s\) perfect matchings. Color
\(s/2\) of them with color \(a\) and the other \(s/2\) with color
\(b\).

Fix \(c\in[k]\). If \(x\in V_i\) and \(i\ne c\), then \(i\)
has exactly two neighbors on \(C_c\). For each such neighbor \(j\),
exactly \(s/2\) edges from \(x\) to \(V_j\) have color \(c\).
No other edge incident with \(x\) has color \(c\), so
\(d_c(x)=s\). All edges between \(A\) and \(V_c\) have color
\(c\), and no edge of color \(c\) joins \(A\cup V_c\) to its
complement. Thus \(A\cup V_c\) is the vertex set of a
color-\(c\) component of order \(s+h\), and every vertex outside
this component has color-\(c\) degree \(s\), as required.
\end{proof}

\begin{corollary}\label{cor:component-isolation}
Let \(k\ge4\) and \(s\ge2\) be even integers, and let \(h\ge2\).
If \(H\) is connected, \(\Delta(H)\ge s+1\), and
\(|V(H)|>s+h\), then \(r(H;k)\ge ks+h+1\). In particular,
for every even \(k\ge4\), $r(S(n,m);k)\ge kn+m+2$ if \(n\ge m\ge1\) and \(n\) is even, and $r(S_n^m;k)\ge k(n-1)+m+2$ if \(n\ge3\) is odd and \(n\ge m\ge1\).
\end{corollary}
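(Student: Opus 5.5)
The plan is to use the coloring of Lemma~\ref{lem:even-coloring} on \(K_{ks+h}\) directly. This already gives the general bound \(r(H;k)\ge ks+h+1\); the two special cases then follow by choosing \(s\) and \(h\) suitably.

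Fix the coloring of \(K_{ks+h}\) given by Lemma~\ref{lem:even-coloring}, and suppose it contains a monochromatic copy of \(H\) in some color \(c\). Since \(\Delta(H)\ge s+1\), some vertex \(v\) of the copy has color-\(c\) degree at least \(s+1\). By Lemma~\ref{lem:even-coloring}, \(v\) lies in a color-\(c\) component of order \(s+h\). Because \(H\) is connected, the whole copy lies in that component. This forces \(|V(H)|\le s+h\), contradicting \(|V(H)|>s+h\). Hence the coloring has no monochromatic \(H\), so \(r(H;k)>ks+h\), which is the first claim.

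For the double star, take \(n\) even and \(n\ge m\ge 1\), and set \(s=n\) and \(h=m+1\). We need \(s\ge2\), which holds since \(n\) is even and positive, and \(h\ge2\), which holds since \(m\ge1\). The graph \(S(n,m)\) is connected with \(\Delta=n+1=s+1\) and has \(n+m+2>s+h=n+m+1\) vertices. The general bound therefore gives \(r(S(n,m);k)\ge kn+m+2\).

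For the subdivided star, take \(n\ge3\) odd and set \(s=n-1\) and \(h=m+1\). Here \(s\) is even and at least \(2\), and \(h\ge2\). The graph \(S_n^m\) is connected, its center has degree \(n=s+1\), and it has \(n+m+1\) vertices. Since \(n+m+1>s+h=n+m\), the general bound gives \(r(S_n^m;k)\ge k(n-1)+m+2\).

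There is no genuine obstacle. The only points to check are that the parity and size hypotheses of Lemma~\ref{lem:even-coloring} hold for the chosen \(s\) and \(h\), and that the vertex counts exceed \(s+h\) strictly, both of which were verified above.
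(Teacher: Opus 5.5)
Your proposal is correct and follows essentially the same route as the paper: it uses the coloring of Lemma~\ref{lem:even-coloring} and the argument that a maximum-degree vertex of a monochromatic copy must lie in the small color-\(c\) component, which by connectivity contains the whole copy. It then specializes with \((s,h)=(n,m+1)\) and \((s,h)=(n-1,m+1)\), with all parity and size hypotheses checked correctly.
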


\begin{proof}
Consider the coloring in Lemma~\ref{lem:even-coloring}. Suppose
that it contains a copy of \(H\) of color \(c\). A vertex of
maximum degree in this copy has color-\(c\) degree at least
\(s+1\), and hence belongs to the color-\(c\) component of order
\(s+h\). Since \(H\) is connected, the entire copy belongs to this
component, contrary to \(|V(H)|>s+h\). Thus
\(r(H;k)\ge ks+h+1\). Taking \((s,h,H)=(n,m+1,S(n,m))\)
and \((s,h,H)=(n-1,m+1,S_n^m)\), respectively, proves the two
consequences.
\end{proof}

Theorem~\ref{thm:double-star} follows immediately from Corollary~\ref{cor:component-isolation} and Theorem~\ref{thm:ruotolo-song-upper}.
\medskip

Let \(t=\lceil(n-m+1)/(k-1)\rceil\). Ruotolo and
Song~\cite[Theorem~3.3]{RuotoloSong} proved
\(r(S_n^m;k)\le k(n-1)+m+2\) under the assumptions \(t>m\)
and \(nt>(t-m)(m-1)t+m((k-1)(n-1)+m)\).
The argument below replaces the term \((t-m)(m-1)t\) by
\((t-m)(m-1)\). The resulting condition is
\((n-m+1)t>m((k-1)(n-1)+1)\), which also implies \(t>m\).

\begin{lemma}\label{lem:subdivided-upper}
Let \(k\ge2\), \(n\ge2\), and \(n\ge m\ge1\), and let
\(t=\lceil(n-m+1)/(k-1)\rceil\). If
\((n-m+1)t>m((k-1)(n-1)+1)\), then
\(r(S_n^m;k)\le k(n-1)+m+2\).
\end{lemma}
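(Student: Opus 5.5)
Let \(N=k(n-1)+m+2\) and fix a \(k\)-edge-coloring of \(K_N\). We aim to find a monochromatic \(S_n^m\) by a degree-counting argument, following the structure of Ruotolo and Song's proof but sharpening the final count.

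\textbf{Step 1: a high-degree vertex.} Each vertex has degree \(N-1=k(n-1)+m+1\). By pigeonhole it has some color with degree at least \(\lceil (N-1)/k\rceil\ge n\) (using \(m\ge1\)). Choose a vertex \(v\) and a color, say red, with red degree \(d\ge n\). Let \(R=N_{\mathrm{red}}(v)\) and \(W=V\setminus(R\cup\{v\})\), so \(|W|=N-1-d\).

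\textbf{Step 2: the embedding problem.} A red \(S_n^m\) centered at \(v\) exists if we can find \(m\) distinct vertices \(x_1,\dots,x_m\in R\), with distinct private red neighbors \(y_1,\dots,y_m\) outside \(\{v\}\cup\{x_i\}\), together with \(n-m\) further vertices of \(R\) disjoint from all \(x_i,y_i\). Since \(|R|\ge n\), it is enough to find a red matching of size \(m\) from a set \(X\subseteq R\) into \(V\setminus\{v\}\) that uses few vertices of \(R\) as partners, so that enough leaves remain. The cleanest form is a red matching of size \(m\) between \(R\) and \(W\). By Theorem~\ref{thm:konig}, if no such matching exists, there is a vertex cover \(Q\) of the red edges of \(G[R,W]\) with \(|Q|\le m-1\).

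\textbf{Step 3: counting non-red edges.} Let \(R'=R\setminus Q\) and \(W'=W\setminus Q\). All edges of \(G[R',W']\) are non-red, so they use only \(k-1\) colors. Now take \(u\in R'\). Its degree inside \(R\cup\{v\}\) is at most \(d\), so \(u\) has at least \(N-1-d\) neighbors in \(W\), and at least \(N-1-d-(m-1)\) in \(W'\). Among these, some color \(c\ne\) red appears at least \(\lceil (N-d-m)/(k-1)\rceil\) times. At this point we use that \(v\) was chosen with maximal monochromatic degree. This bounds \(d\) from above and guarantees that each \(u\in R'\) has many edges of some non-red color into \(W'\). Roughly, these give at least \(t=\lceil (n-m+1)/(k-1)\rceil\) edges of a non-red color at each of at least \(n-m+1\) vertices of \(R'\). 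So \(G[R',W']\) contains at least \((n-m+1)t\) edges, distributed among \(k-1\) colors.

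Double counting from the \(W'\) side should then produce a vertex \(w\in W'\) together with a color \(c\) for which the color-\(c\) structure around \(w\), or around some \(u\in R'\), has large enough degree to contain a color-\(c\) \(S_n^m\). The hypothesis \((n-m+1)t>m((k-1)(n-1)+1)\) is exactly the inequality needed to force this.

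\textbf{Step 4: the main obstacle.} The delicate point is the improvement over Ruotolo and Song, namely replacing the term \((t-m)(m-1)t\) by \((t-m)(m-1)\). Their count charges each of the at most \(m-1\) cover vertices \(t\) times. I would try to charge each cover vertex only once. A vertex \(q\in Q\) can block at most one matching edge, so a greedy or augmenting-path argument should extract the \(m\) needed subdivided edges while losing only \(O(m)\) candidate edges in total, rather than \(O(mt)\).

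Concretely, I would choose the \(m\) subdivided paths greedily among vertices of color-\(c\) degree at least \(t\). Each earlier choice destroys at most one option for each later vertex, which gives the linear correction term. Verifying that the resulting inequality is implied by the hypothesis, and that it also yields \(t>m\), is the step I expect to require the most care.
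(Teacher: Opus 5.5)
Your Steps 1--2 match the paper's opening (a monochromatic $K_{1,n}$, then K\H{o}nig's theorem). After that there is a genuine gap: Steps 3--4 never give a counting argument, and the mechanism you propose is not the one that works.

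\textbf{The setup needs repairing.} You keep the whole red neighbourhood $R$, so $|W|=N-1-d$ can be small. Choosing $v$ of \emph{maximal} monochromatic degree gives no upper bound on $d$; it makes $d$ as large as possible. The fix is to keep exactly $n$ red neighbours as the leaf set $A$. Then $B=V\setminus(A\cup\{v\})$ has exactly $(k-1)(n-1)+m$ vertices. After deleting the cover ($\le m-1$ vertices) you get $|A'|\ge n-m+1$ and $|B'|\ge (k-1)(n-1)+1$, with no red edges between $A'$ and $B'$.

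Your sentence about ``at least $t$ edges of a non-red color at each of at least $n-m+1$ vertices'' also confuses the roles. Since $|B'|\ge(k-1)(n-1)+1$, each $u\in A'$ has at least $n$ edges of a \emph{single} non-red color into $B'$. The number $t=\lceil (n-m+1)/(k-1)\rceil$ counts \emph{centers}, not degrees. Pigeonhole on the $k-1$ colors gives $a_1,\dots,a_t\in A'$ and one color $c$, with $n$-sets $L_i\subseteq B'$ of color-$c$ neighbours of $a_i$.

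\textbf{The missing core is a multiplicity count.} Put $p(b)=|\{i:b\in L_i\}|$ and $D=\{b\in B':p(b)\ge m+1\}$. Suppose some $L_i$ contains $m$ vertices $b_1,\dots,b_m$ of $D$. Each $b_j$ has at least $m$ color-$c$ neighbours in $\{a_1,\dots,a_t\}\setminus\{a_i\}$, so the $b_j$ can be greedily matched to distinct such $a$'s. This yields a color-$c$ $S_n^m$ centered at $a_i$. So the high-multiplicity vertices act as subdivision vertices, not as centers, and $|L_i\cap D|\le m-1$ for every $i$.

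With $S=\sum_{b\in D}p(b)$ you have both $S\le (m-1)t$ and $S\le t|D|$. Since $t>m$,
\[ nt=\sum_{b\in B'}p(b)\le S+m(|B'|-|D|)\le \Bigl(1-\frac{m}{t}\Bigr)S+m|B'|\le (m-1)(t-m)+m\bigl((k-1)(n-1)+m\bigr). \]
This rearranges to $(n-m+1)t\le m((k-1)(n-1)+1)$, contradicting the hypothesis.

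The sharpening over Ruotolo--Song comes from using $S\le t|D|$ to absorb the term $-m|D|$, which is where $t>m$ is needed. It does not come from charging K\H{o}nig cover vertices once; the cover plays no role after Step 2. Finally, $t>m$ is immediate: if $t\le m$, then $(n-m+1)t\le m(n-m+1)\le m((k-1)(n-1)+1)$.
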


\begin{proof}
The hypothesis implies \(t>m\); otherwise,
\((n-m+1)t\le m(n-m+1)\le m((k-1)(n-1)+1)\).
Let \(G=K_{k(n-1)+m+2}\) have a \(k\)-edge-coloring with no
monochromatic \(S_n^m\). Since every vertex has degree
\(k(n-1)+m+1>k(n-1)\), \(G\) contains a monochromatic
\(K_{1,n}\), say blue, with center \(x\) and leaf set \(A\).
Set \(B=V(G)\setminus(A\cup\{x\})\). Then
\(|B|=(k-1)(n-1)+m\). A blue matching of size \(m\) between
\(A\) and \(B\), together with the star, forms a blue \(S_n^m\).
By Theorem~\ref{thm:konig}, the blue edges between \(A\) and \(B\)
have a vertex cover \(U\) of size at most \(m-1\). Put
\(A'=A\setminus U\) and \(B'=B\setminus U\). Then
\(|A'|\ge n-m+1\), \(|B'|\ge(k-1)(n-1)+1\), and no edge
between \(A'\) and \(B'\) is blue.

Each vertex of \(A'\) has at least \(n\) neighbors of one color
in \(B'\). As there are \(k-1\) available colors, there exist
distinct vertices \(a_1,\ldots,a_t\in A'\) and sets
\(L_1,\ldots,L_t\subseteq B'\) such that \(|L_i|=n\) and all
edges between \(a_i\) and \(L_i\) are red for every \(i\in[t]\).
The sets \(L_i\) are indexed by their centers and need not be
distinct. For \(b\in B'\), let
\(p(b)=|\{i\in[t]:b\in L_i\}|\), and put
\(D=\{b\in B':p(b)\ge m+1\}\).

Suppose that \(|L_i\cap D|\ge m\) for some \(i\in[t]\).
Choose distinct \(b_1,\ldots,b_m\in L_i\cap D\). Each \(b_j\)
has at least \(m\) red neighbors in
\(\{a_1,\ldots,a_t\}\setminus\{a_i\}\). We may therefore
choose distinct \(c_1,\ldots,c_m\) from this set with \(b_jc_j\)
red for every \(j\in[m]\). The star with center \(a_i\) and
leaf set \(L_i\), together with these \(m\) edges, is a red
\(S_n^m\), a contradiction. Thus \(|L_i\cap D|\le m-1\)
for every \(i\in[t]\).

Set \(Q=\sum_{b\in D}p(b)\). Counting the pairs \((i,b)\) with
\(b\in L_i\cap D\) gives \(Q\le(m-1)t\), and
\(Q\le t|D|\). Since \(t>m\),
\begin{align*}
 nt=\sum_{b\in B'}p(b)
 &\le Q+m(|B'|-|D|)\\
 &\le \left(1-\frac mt\right)Q+m|B'|\\
 &\le (m-1)(t-m)+m\bigl((k-1)(n-1)+m\bigr)\\
 &= (m-1)t+m\bigl((k-1)(n-1)+1\bigr).
\end{align*}
Hence \((n-m+1)t\le m((k-1)(n-1)+1)\), contrary to the
hypothesis.
\end{proof}

\textbf{Proof of Theorem~\ref{thm:subdivided-star}.}
The lower bound follows from Corollary~\ref{cor:component-isolation},
and Lemma~\ref{lem:subdivided-upper} gives the matching upper bound.
Let \(a=k-1\),
\(n_0=ma^2+2m-2\), and
\(F(n)=(n-m+1)^2-ma(a(n-1)+1)\). Then
\(F(n_0)=ma(a-1)+(m-1)^2>0\), and, for \(n\ge n_0\),
\(F(n+1)-F(n)=2(n-m+1)+1-ma^2>0\). Thus \(F(n)>0\) for
\(n\ge n_0\). Since \(t\ge(n-m+1)/a\), it follows that
\((n-m+1)t\ge(n-m+1)^2/a>m(a(n-1)+1)\), as required.\qed

\section{Multicolor list Ramsey numbers}
In this section, we begin with three theorems that will be used in subsequent proofs.
\begin{theorem}[Landau's theorem~\cite{Landau}]\label{thm:landau}
The sequence \(p_1\leq p_2\leq\cdots\leq p_t\) of integers is the score sequence of a tournament of order \(t\) if and only if \(\sum_{i=1}^m p_i\ge\binom m2\) for $m\in[t]$, with equality for \(m=t\).
\end{theorem}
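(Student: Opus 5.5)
Landau's theorem is a classical result quoted from the literature, so the paper does not reprove it. If a proof were required, I would argue as follows.

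\emph{Necessity.} Let \(T\) be a tournament whose score sequence is \(p_1\le\cdots\le p_t\), and take any \(m\) of its vertices. The subtournament they induce has \(\binom m2\) arcs, so their scores already sum to at least \(\binom m2\). The \(m\) smallest scores are at most the scores of any other \(m\) vertices, but they are themselves the scores of some \(m\) vertices, so \(\sum_{i\le m}p_i\ge\binom m2\). For \(m=t\) every arc is counted exactly once, which gives equality.

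\emph{Sufficiency.} I would use a flow, or equivalently Hall/K\"onig, argument in the spirit of Theorem~\ref{thm:konig}. Build a bipartite graph with one side the \(\binom t2\) pairs \(\{i,j\}\) and the other side the vertices, where vertex \(i\) has capacity \(p_i\) and each pair is joined to its two members. A tournament with the given scores is exactly an assignment of every pair to one of its endpoints, its winner, in which vertex \(i\) receives exactly \(p_i\) pairs. Replace vertex \(i\) by \(p_i\) copies. The totals match because \(\sum_i p_i=\binom t2\). A perfect matching therefore exists if and only if Hall's condition holds on the vertex side. For a set \(S\) of vertices with \(|S|=m\), the pairs adjacent only to copies of vertices in \(S\) are precisely the \(\binom m2\) pairs inside \(S\).

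It is cleaner to check Hall's condition on the pair side. For any family \(\mathcal P\) of pairs, let \(S\) be the set of vertices covered by \(\mathcal P\). Then \(|\mathcal P|\le\binom{|S|}2\) and \(|N(\mathcal P)|=\sum_{i\in S}p_i\). Since the \(p_i\) are sorted, \(\sum_{i\in S}p_i\ge\sum_{i\le|S|}p_i\ge\binom{|S|}2\ge|\mathcal P|\). Hence every pair can be assigned to a distinct copy. This yields a tournament in which every score is at most \(p_i\), and the equality \(\sum_i p_i=\binom t2\) forces every score to be exactly \(p_i\).

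The only subtle point is this last step, upgrading ``at most'' to ``exactly'', and the total count handles it immediately.
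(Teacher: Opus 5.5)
Your proof is correct. The paper itself gives no proof of Theorem~\ref{thm:landau}: it quotes the result from \cite{Landau}. It uses it exactly once, to realize the two-valued score sequence (\(t/2\) scores equal to \(a\) and \(t/2\) equal to \(b\)) in the even-\(n\) case of the proof of Theorem~\ref{thm:list-star}. So there is no competing argument in the paper to compare with.

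What you give is the standard Hall-type proof. Its advantage is that it is self-contained, using only the matching machinery the paper already relies on, since Theorem~\ref{thm:konig} implies Hall's theorem.

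Two small points are worth making explicit:
\begin{enumerate}[label=(\alph*)]
\item The hypothesis with \(m=1\) gives \(p_1\ge\binom12=0\), hence \(p_i\ge0\) for all \(i\). This is needed for ``replace vertex \(i\) by \(p_i\) copies'' to make sense.
\item Your aside about Hall's condition on the vertex side is unnecessary. The pairs you count there (those inside \(S\)) are the ones relevant to the pair side. On the copy side the condition would instead read \(\sum_{i\notin S}p_i\ge\binom{t-|S|}2\), i.e.\ the Landau inequality for the complement of \(S\). Since both sides have \(\binom t2\) elements, a matching saturating the pair side is automatically perfect. That is precisely your counting step upgrading ``score at most \(p_i\)'' to ``score exactly \(p_i\)'', so the argument as you finally run it on the pair side is complete.
\end{enumerate}
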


\begin{theorem}[Galvin's theorem~\cite{Galvin}]\label{thm:galvin}
Every bipartite multigraph \(G\) satisfies $\chi'_{\ell}(G)=\chi'(G)=\Delta(G)$.
\end{theorem}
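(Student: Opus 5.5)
The plan is the kernel method: orient the line graph of \(G\) so that every induced subdigraph has a kernel and every out-degree is at most \(\Delta(G)-1\), and then color greedily by kernels. Throughout, \(G\) is a bipartite multigraph with bipartition \((A,B)\) and \(\Delta=\Delta(G)\).

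First, the equality \(\chi'(G)=\Delta\) holds because \(\chi'(G)\ge\Delta\) trivially. For the reverse inequality I would use K\H{o}nig's edge-coloring theorem, proved in the standard way. Embed \(G\) in a \(\Delta\)-regular bipartite multigraph, adding vertices if necessary. Hall's condition holds in this regular multigraph, so it has a perfect matching. Remove that matching and induct on \(\Delta\). Since \(\chi'\le\chi'_\ell\) always, it then remains to show \(\chi'_\ell(G)\le\Delta\).

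Fix a proper edge-coloring \(c:E(G)\to[\Delta]\). Orient the line graph \(L(G)\) as follows, treating parallel edges as distinct vertices of \(L(G)\). If edges \(e\ne f\) share an endpoint in \(A\), orient \(e\to f\) when \(c(e)>c(f)\). If they share an endpoint in \(B\), orient \(e\to f\) when \(c(e)<c(f)\). Two distinct edges sharing both endpoints are parallel and get two arcs in opposite directions, which is harmless.

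At its \(A\)-end, \(e\) has at most \(c(e)-1\) out-neighbors. At its \(B\)-end, it has at most \(\Delta-c(e)\) out-neighbors. So every out-degree is at most \(\Delta-1\).

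Next I would prove the key claim: for every \(F\subseteq E(G)\), the induced subdigraph \(D[F]\) has a kernel. A kernel here is a set \(K\subseteq F\) that is independent (a matching) and absorbing: every \(f\in F\setminus K\) has an out-neighbor in \(K\). This is the Gale--Shapley stable matching theorem in disguise. Each vertex of \(A\) ranks its incident edges in \(F\) by increasing color, and each vertex of \(B\) ranks them by decreasing color. An arc \(f\to e\) at a shared vertex \(v\) means exactly that \(v\) prefers \(e\) to \(f\). Since \(c\) is proper, all preferences are strict, even among parallel edges. A stable matching \(K\) is a matching, hence independent in \(D[F]\). Stability says every \(f=ab\in F\setminus K\) is blocked: \(a\) or \(b\) is matched in \(K\) by some \(e\) that it prefers to \(f\). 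That gives \(f\to e\) with \(e\in K\), so \(K\) is absorbing. The existence of a stable matching, via the deferred acceptance algorithm, is the step I expect to need the most care, mainly in adapting it to multigraphs and in checking the direction conventions. The underlying argument itself is classical.

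Finally I would apply the Bondy--Boppana--Siegel lemma: if every induced subdigraph of \(D\) has a kernel and \(|L(v)|>d^+(v)\) for every vertex \(v\), then \(D\) is \(L\)-colorable. The proof is by induction on \(\sum|L(v)|\). Pick a color \(\alpha\) and let \(F=\{e: \alpha\in L(e)\}\). Take a kernel \(K\) of \(D[F]\) and color every edge of \(K\) with \(\alpha\); this is valid because \(K\) is a matching. Delete \(K\), and delete \(\alpha\) from the lists of \(F\setminus K\). Each edge in \(F\setminus K\) loses one color but also loses at least one out-neighbor, namely its out-neighbor in \(K\). Edges outside \(F\) lose no colors. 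So the inequality \(|L(v)|>d^+(v)\) persists, and induction finishes. With \(|L(e)|=\Delta>\Delta-1\ge d^+(e)\), this gives \(\chi'_\ell(G)\le\Delta\), completing the proof.
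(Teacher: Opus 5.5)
The paper gives no proof of this statement. It quotes it from Galvin and uses only the consequence stated right after it: a bipartite multigraph has a proper \(L\)-edge-coloring whenever every list has at least \(\Delta(G)\) colors. That consequence is applied to the auxiliary graph \(G^\ast\) in the proof of Theorem~\ref{thm:list-star}.

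Your proposal is the standard kernel-method proof of Galvin's theorem, and it is correct as written. Its steps are: K\H{o}nig's edge-coloring theorem for \(\chi'=\Delta\); the orientation of the line graph induced by the coloring, with out-degree at most \(\Delta-1\); kernels of induced subdigraphs obtained as Gale--Shapley stable matchings with strict edge-preferences; and the Bondy--Boppana--Siegel induction. Your direction conventions and your handling of parallel edges both check out.
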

\noindent
By Theorem~\ref{thm:galvin}, every bipartite multigraph \(G\) admits a proper \(L\)-edge-coloring for any edge-list assignment \(L\) satisfying $|L(e)|\geq\Delta(G)$ for every $e\in E(G)$.

\begin{theorem}[Grossman, Harary, and Klawe~\cite{GrossmanEtAl}]
\label{thm:ghk-double-star}
Let \(n\ge m\ge1\) be integers.

\noindent
\textup{(i)} If \(n\) is odd and \(m\le2\), then
\(r(S(n,m);2)=\max\{2n+1,n+2m+2\}\).

\noindent
\textup{(ii)} Suppose that \(n\le\sqrt{2}m\) or \(n\ge3m\).
If \(n\) is even or \(m\ge3\), then
\(r(S(n,m);2)=\max\{2n+2,n+2m+2\}\).
\end{theorem}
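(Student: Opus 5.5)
The plan is to prove matching lower and upper bounds, with the upper bound split according to the regimes in (i) and (ii).

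\emph{Lower bounds.} Since \(S(n,m)\) contains \(K_{1,n+1}\) (the center of the \(K_{1,n}\) together with its \(n\) leaves and the other center), Theorem~\ref{thm:star-ramsey} with \(k=2\) gives \(r(S(n,m);2)\ge r(K_{1,n+1};2)\). This equals \(2n+1\) when \(n+1\) is even, i.e.\ \(n\) is odd, and \(2n+2\) when \(n\) is even. For the term \(n+2m+2\), I would color \(K_{n+2m+1}\) as follows. Split the vertices into \(X\) and \(Y\) with \(|X|=n+m+1\) and \(|Y|=m\). Color the edges inside \(X\) and inside \(Y\) red, and all \(X\)--\(Y\) edges blue.
\begin{itemize}
\item Each red component has fewer than \(n+m+2=|V(S(n,m))|\) vertices, so there is no red \(S(n,m)\).
\item A blue copy has its two centers \(x\in X\) and \(y\in Y\) adjacent. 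The center \(x\) has only \(m-1\) blue neighbors besides \(y\), and \(x\) must play the role of the center of degree at least \(m+1\). So there is no blue \(S(n,m)\).
\end{itemize}
For \(n\) even, or for \(m\ge3\) with \(n\) odd, the value \(2n+2\) must be reached even when \(n\) is odd. I would obtain it by a variant of the star construction on \(K_{2n+1}\) in which the red graph is \(n\)-regular. The \(n\)-regular graph must be chosen so that no two adjacent vertices of degree \(n\) have enough private neighbors. For example, take a red \(K_{n+1}\) and, for odd \(n\), a suitable \(n\)-regular graph on the remaining vertices, arranged so that each red component is too small or too dense to host the double star with \(m\ge3\).

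\emph{Upper bounds.} Let \(N\) be the claimed value and fix a red/blue coloring of \(K_N\). The steps are:
\begin{enumerate}
\item Since \(N\ge r(K_{1,n+1};2)\), some vertex \(u\), say red, has red degree at least \(n+1\).
\item Look at a red neighbor \(v\) of \(u\). If \(v\) has at least \(m\) red neighbors outside \(N_R[u]\), or more generally at least \(m\) red neighbors other than \(u\) after leaving \(n\) red neighbors of \(u\) available, we obtain a red \(S(n,m)\). This is a Hall/König-type count (Theorem~\ref{thm:konig}).
\item Otherwise every red neighbor of \(u\) has small red degree into the rest of the graph. Hence these vertices have large blue degree, of order \(N-1-(n+1)-m\ge n\).
\item Inside this large blue-heavy set, find two adjacent blue vertices with disjoint blue neighborhoods of sizes \(n\) and \(m\).
\end{enumerate}
The size conditions \(n\ge3m\) or \(n\le\sqrt2 m\) enter precisely in step 4. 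When \(n\ge3m\), the blue degrees exceed \(n+m\), so disjointness is automatic. When \(n\le\sqrt2m\), the bound \(N\ge n+2m+2\) dominates and a counting argument on the common neighborhoods works.

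The main obstacle is this final case analysis in the upper bound, together with verifying the exact lower-bound colorings for odd \(n\) with \(m\ge3\). In that regime one must show that the parity gain from the star bound disappears, which requires a careful description of \(n\)-regular red graphs on \(2n+1\) vertices. I would attack it by assuming a coloring on \(2n+1\) vertices with no monochromatic \(S(n,m)\), forcing every vertex to have red and blue degree exactly \(n\), and deriving a contradiction from two adjacent red vertices with large common neighborhood. Since the result is cited from \cite{GrossmanEtAl}, I would ultimately defer the detailed verification to that paper.
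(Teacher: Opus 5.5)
The paper gives no proof of this theorem. It quotes the result from Grossman, Harary, and Klawe and uses it only as a black box in Theorems~\ref{thm:two-color-transfer}(ii) and~\ref{thm:two-color-subdivided}, so your closing deferral matches what the paper does.

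\textbf{What holds up.} Judged as a proof, only your two easy lower bounds survive. Both are correct: $K_{1,n+1}\subseteq S(n,m)$ combined with Theorem~\ref{thm:star-ramsey}, and the coloring of $K_{n+2m+1}$ with red $K_{n+m+1}\cup K_m$ and blue $K_{n+m+1,m}$.

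\textbf{The missing lower bound $2n+2$ for odd $n$, $m\ge3$.} This is needed when $n\ge3m$, and it cannot be built as you propose. For odd $n$ there is no $n$-regular graph on $2n+1$ vertices, since the degree sum $n(2n+1)$ is odd; this parity obstruction is exactly why $r(K_{1,n+1};2)=2n+1$. Your example also fails on its own terms. There is no $n$-regular graph on the remaining $n$ vertices, and the blue $K_{n+1,n}$ between the red $K_{n+1}$ and the rest already contains a blue $S(n,m)$.

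Any valid coloring must contain a vertex $w$ of, say, red degree at least $n+1$. It must also prevent every red neighbor of $w$ from having $m$ red neighbors outside $N_R[w]$, and this is where $m\ge3$ enters. One construction that works for odd $n\ge9$:
\begin{itemize}
\item Let $W=N_R(w)$ have $n+1$ vertices, and let the other $n-1$ vertices form a red clique $Z$ whose edges to $w$ are blue.
\item Join $Z$ to $W$ by red edges so that every vertex of $Z$ gets exactly $2$ and every vertex of $W$ gets $1$ or $2$.
\item Color inside $W$ so that every vertex other than $w$ has red degree exactly $n$. This needs a blue graph on $W$ with degrees in $\{2,3\}$ and even degree sum $3n-1$, which exists.
\end{itemize}
Then $w$ is the only vertex of red degree above $n$, its red neighbors have at most $2<m$ red neighbors outside $N_R[w]$, and no blue degree exceeds $n$. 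For $m=2$ this count fails once $n\ge5$, consistent with (i).

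Your last paragraph then runs in the wrong direction. Deriving a contradiction from an $S(n,m)$-free coloring of $K_{2n+1}$ would prove $r(S(n,m);2)\le2n+1$, which contradicts (ii). Moreover, such a coloring is not forced to have all monochromatic degrees equal to $n$.

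\textbf{The upper-bound outline.} If step~2 fails for a red neighbor $v$ of $u$, then $v$ has at most $m-1$ red neighbors outside $N_R[u]$. Its blue degree is therefore only guaranteed to be $N-d_R(u)-m$, which is $n+1-m$ when $N=2n+2$ and $d_R(u)=n+1$. Your own expression $N-1-(n+1)-m$ equals $n-m$ there, not something at least $n$. Nothing stops $v$ from being red-adjacent to all of $N_R(u)$ (take $N_R(u)$ to be a red clique). Then that set spans no blue edge, so step~4's search for two blue-adjacent vertices inside it, and the claim that their blue degrees exceed $n+m$, have no basis.

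\textbf{Unused hypotheses.} More fundamentally, the outline never uses the hypotheses that decide the answer.
\begin{itemize}
\item For odd $n$ and $N=2n+1$ in (i), a correct argument must use $m\le2$, because for $m\ge3$ and $n\ge3m$ the conclusion is false by the construction above.
\item The condition $n\le\sqrt2m$ or $n\ge3m$ in (ii) is said to ``enter in step~4'' with no computation. Yet it is exactly what separates the proved cases from the range $\sqrt2m<n<3m$, about which the theorem makes no claim.
\end{itemize}

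What your proposal actually establishes is $r(S(n,m);2)\ge\max\{r(K_{1,n+1};2),\,n+2m+2\}$. The upper bounds, and the lower bound $2n+2$ for odd $n$ with $m\ge3$, still have to come from the original argument of Grossman, Harary, and Klawe.
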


\textbf{Proof of Theorem~\ref{thm:list-star}.}
Let \(k\geq1\) and \(n\geq3\). If \(k=1\), then \(K_n\) does not
contain \(K_{1,n}\), whereas assigning the singleton list \(\{1\}\) to
every edge of \(K_{n+1}\) forces a monochromatic copy of \(K_{1,n}\).
Hence \(r_{\ell}(K_{1,n};1)=r(K_{1,n};1)=n+1\).

Assume that \(k\geq2\). Let \(\varepsilon=0\) if \(k\) and \(n\) are
both even, and let \(\varepsilon=1\) otherwise. Let
\(t=k(n-1)+\varepsilon\). By Theorem~\ref{thm:star-ramsey}, we have
\(r(K_{1,n};k)=t+1\). Since
\(r_{\ell}(K_{1,n};k)\leq r(K_{1,n};k)\), it remains to prove that
\(r_{\ell}(K_{1,n};k)>t\).

We first construct a tournament \(T\) on \(t\) vertices such that the
out-neighborhood and the in-neighborhood of every vertex can be
partitioned into sets of size at most \(k\), with \(n-1\) sets in
total.

Suppose first that \(n\) is even. Then \(n\geq4\), and \(t\) is even.
Let \(a=k(n-2)/2\) and \(b=kn/2+\varepsilon-1\). Since
\(b-a=k+\varepsilon-1>0\), let \(p_1,\ldots,p_t\) be the
nondecreasing sequence consisting of \(t/2\) terms equal to \(a\) and
\(t/2\) terms equal to \(b\). Since \(a+b=t-1\), we have
\(\sum_{i=1}^{t}p_i=t(a+b)/2=\binom{t}{2}\). For
\(1\leq m\leq t/2\), we have
\(\sum_{i=1}^{m}p_i-\binom{m}{2} =\frac{m\bigl(k(n-2)-m+1\bigr)}{2}\geq0\),
since
\(k(n-2)-m+1\geq k(n-2)-t/2+1
=(k(n-3)-\varepsilon+2)/2>0\). For \(t/2<m<t\), we have
\(\sum_{i=1}^{m}p_i-\binom{m}{2} =\frac{(t-m)(m-k-\varepsilon+1)}{2}\geq0\),
as
\(m-k-\varepsilon+1>t/2-k-\varepsilon+1
=(k(n-3)-\varepsilon+2)/2>0\). By
Theorem~\ref{thm:landau}, there exists a tournament \(T\) with this
score sequence. As \(a+b=t-1\), every vertex of outdegree \(a\) has
indegree \(b\), and every vertex of outdegree \(b\) has indegree \(a\).

If \(\varepsilon=1\), then \(a=k(n-2)/2\) and \(b=kn/2\). Thus a set
of size \(a\) can be partitioned into \((n-2)/2\) sets of size \(k\),
whereas a set of size \(b\) can be partitioned into \(n/2\) sets of
size \(k\). If \(\varepsilon=0\), then \(a=k(n-2)/2\) and
\(b=(n/2-1)k+(k-1)\). Hence a set of size \(a\) can be partitioned
into \((n-2)/2\) sets of size \(k\), whereas a set of size \(b\) can
be partitioned into \(n/2\) sets, of which \(n/2-1\) have size \(k\)
and one has size \(k-1\). Therefore, in either case, the
out-neighborhood and the in-neighborhood of each vertex can be
partitioned into sets of size at most \(k\), with
\((n-2)/2+n/2=n-1\) sets in total.

Suppose next that \(n\) is odd. Then \(\varepsilon=1\), and
\(t=k(n-1)+1\) is odd. Arrange the vertices \(v_1,\ldots,v_t\) in a
cyclic order, and orient the edges from each vertex toward the next
\((t-1)/2=k(n-1)/2\) vertices in this order. It follows that \(T\) is
regular, and every vertex has indegree and outdegree both equal to
\(k(n-1)/2\). Thus both its out-neighborhood and its in-neighborhood
can be partitioned into \((n-1)/2\) sets of size \(k\), and there are
\(n-1\) sets in total.

Let \(V(T)=\{v_1,\ldots,v_t\}\), and let \(L\) be an arbitrary
\(k\)-list assignment on \(E(K_t)\). Orient every edge of \(K_t\) as
in \(T\). For each \(j\in[t]\), partition the out-neighbors of \(v_j\)
into sets \(O_j^1,\ldots,O_j^{q_j^+}\), and partition the in-neighbors
of \(v_j\) into sets \(I_j^1,\ldots,I_j^{q_j^-}\), where every set
has size at most \(k\) and \(q_j^++q_j^-=n-1\).

For each \(j\in[t]\), let \(x_{j,+}^i\) be the vertex representing
\(O_j^i\), and let \(x_{j,-}^i\) be the vertex representing \(I_j^i\).
Construct an auxiliary graph \(G^\ast\) with vertex classes
\(X^+=\{x_{j,+}^i:j\in[t],\,1\leq i\leq q_j^+\}\) and
\(X^-=\{x_{j,-}^i:j\in[t],\,1\leq i\leq q_j^-\}\). For every arc
\(\overrightarrow{v_pv_q}\) of \(T\), let \(i_+\) and \(i_-\) be the
unique indices such that \(v_q\in O_p^{i_+}\) and
\(v_p\in I_q^{i_-}\), and add the edge
\(x_{p,+}^{i_+}x_{q,-}^{i_-}\) to \(G^\ast\). No other edges are
added. Thus every edge of \(G^\ast\) has one endpoint in \(X^+\) and
the other in \(X^-\), so \(G^\ast\) is bipartite.

The endpoints of each edge of \(G^\ast\) determine the corresponding
tail and head in \(T\). Hence the construction defines a bijection
between the arcs of \(T\) and the edges of \(G^\ast\). The edges
incident with \(x_{j,+}^i\) correspond precisely to the vertices in
\(O_j^i\), and the edges incident with \(x_{j,-}^i\) correspond
precisely to the vertices in \(I_j^i\). Since every block has size at
most \(k\), we have \(\Delta(G^\ast)\leq k\).

Transfer \(L\) to a list assignment \(L^\ast\) on \(G^\ast\) by
defining
\(L^\ast(x_{p,+}^{i_+}x_{q,-}^{i_-})=L(v_pv_q)\) whenever
\(\overrightarrow{v_pv_q}\) corresponds to
\(x_{p,+}^{i_+}x_{q,-}^{i_-}\). Every edge of \(G^\ast\) therefore
has a list of \(k\) colors. By Theorem~\ref{thm:galvin}, \(G^\ast\)
has a proper \(L^\ast\)-edge-coloring \(\varphi^\ast\). Transfer
\(\varphi^\ast\) back to \(K_t\) by defining
\(\varphi(v_pv_q)=
\varphi^\ast(x_{p,+}^{i_+}x_{q,-}^{i_-})\) for every arc
\(\overrightarrow{v_pv_q}\) of \(T\). By the definition of
\(L^\ast\), this is an \(L\)-edge-coloring of \(K_t\).

Fix \(v_j\in V(K_t)\) and a color \(\gamma\). Since
\(\varphi^\ast\) is proper, at most one edge of color \(\gamma\) is
incident with each auxiliary vertex associated with an outgoing or
incoming block of \(v_j\). Therefore, the number of edges of color
\(\gamma\) incident with \(v_j\) is at most
\(q_j^++q_j^-=n-1\). Thus no vertex is incident with \(n\) edges of
one color, and \(\varphi\) contains no monochromatic copy of
\(K_{1,n}\). Since \(L\) is arbitrary, we have
\(r_{\ell}(K_{1,n};k)>t\). Hence
\(r_{\ell}(K_{1,n};k)=r(K_{1,n};k)=t+1\), as required.\qed

\medskip
For an edge-coloring \(\varphi\), let \(d_{\varphi}(v,\alpha)\)
denote the number of edges of color \(\alpha\) incident with \(v\). A connected graph is called Eulerian if it has a closed trail
containing every edge exactly once.

\begin{lemma}
\label{lem:euler-tour-balancing}
Let \(G\) be a connected Eulerian graph with at least one edge, and
let \(L\) be a \(2\)-list assignment on \(E(G)\). Then \(G\) admits
an \(L\)-edge-coloring \(\varphi\) satisfying
\(d_{\varphi}(v,\alpha)\le d_G(v)/2\) for every vertex \(v\) and
every color \(\alpha\) if and only if \(|E(G)|\) is even or the
lists are not all equal.
\end{lemma}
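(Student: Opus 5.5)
Both directions will be read off an Euler tour \(W=(e_1,e_2,\ldots,e_M)\) of \(G\), \(M=|E(G)|\), viewed as a closed walk. Each visit of \(W\) to a vertex \(v\) uses two consecutive edges \(e_i,e_{i+1}\) (indices mod \(M\)). There are exactly \(d_G(v)/2\) such visits, and they partition the edges at \(v\) into pairs, with a loop-free graph assumed as in the paper.

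\emph{Necessity.} Suppose all lists equal \(\{\alpha,\beta\}\) and \(M\) is odd, and suppose a valid \(\varphi\) exists. Every vertex has even degree, and each color class has degree at most \(d_G(v)/2\) at \(v\). The two classes together have degree \(d_G(v)\), so \(d_\varphi(v,\alpha)=d_\varphi(v,\beta)=d_G(v)/2\) for every \(v\). Summing over \(v\) gives \(2|\varphi^{-1}(\alpha)|=\sum_v d_G(v)/2=M\), contradicting that \(M\) is odd.

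\emph{Sufficiency.} The plan is to color along \(W\) so that at every visit the two consecutive edges receive different colors. Then each color appears at most once per pair at \(v\), hence at most \(d_G(v)/2\) times. So it suffices to find an \(L\)-coloring of the cyclic sequence \(e_1,\ldots,e_M\) in which cyclically consecutive entries differ. This is exactly a proper \(L\)-coloring of the cycle \(C_M\), where \(e_i\) is a vertex, \(e_ie_{i+1}\) is an edge, and each list has size \(2\). (For \(M=2\), the tour is a 2-cycle \(e_1e_2e_1\), meaning a pair of parallel edges; the same requirement \(\varphi(e_1)\neq\varphi(e_2)\) works. For \(M=1\) there would be a loop, which is excluded.) It is classical that \(C_M\) is \(2\)-choosable when \(M\) is even. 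When \(M\) is odd, it has a proper coloring from \(2\)-lists exactly when the lists are not all equal. I will include the short argument.

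Choose an index \(i\) with \(L(e_i)\neq L(e_{i+1})\). If \(M\) is even and all lists are equal, simply alternate the two colors. Otherwise pick \(c\in L(e_{i+1})\setminus L(e_i)\) and color \(e_{i+1}\) with \(c\). Then greedily color \(e_{i+2},e_{i+3},\ldots,e_{i}\) in order, each time avoiding the color of the predecessor; this is possible since every list has size \(2\). At the last step, \(e_i\) must avoid both the color of \(e_{i-1}\) and the color \(c\) of \(e_{i+1}\). Since \(c\notin L(e_i)\), only one color is forbidden, so a choice remains.

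I expect the main obstacle to be only bookkeeping: verifying that the pairing of edges at each vertex by tour visits covers every edge exactly once, including the visits through the start vertex that wrap around modulo \(M\). This is what makes "consecutive edges differ" imply the degree bound.
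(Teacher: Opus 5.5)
Your proposal is correct and is essentially the paper's proof: necessity uses the same degree-sum parity count \(2|\varphi^{-1}(\alpha)|=M\), and sufficiency alternates colors when the lists are equal and \(M\) is even. Otherwise it colors greedily along the Euler circuit, starting from an edge whose color lies outside its cyclic predecessor's list, so that all cyclically consecutive edges differ and the circuit's pairing at each vertex yields \(d_\varphi(v,\alpha)\le d_G(v)/2\). (Your side remark about \(M=2\) is harmless but unnecessary, since the paper's graphs are simple and so \(M\ge3\).)
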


\begin{proof}
Let \(M=|E(G)|\), and let \(e_1,\ldots,e_M\) be the cyclic order
of the edges on an Euler circuit. If the lists are equal and \(M\)
is even, color the edges alternately with the two colors in the
common list. If the lists are not all equal, two consecutive edges
have different lists. Relabeling cyclically, we may assume that
\(L(e_1)\ne L(e_M)\). Choose
\(\varphi(e_1)\in L(e_1)\setminus L(e_M)\), and then choose
\(\varphi(e_i)\in L(e_i)\setminus\{\varphi(e_{i-1})\}\) for
\(2\le i\le M\). Consecutive edges on the circuit receive different
colors, including \(e_M,e_1\). At each vertex \(v\), the circuit
pairs its incident edges into \(d_G(v)/2\) pairs of different
colors. Hence \(d_{\varphi}(v,\alpha)\le d_G(v)/2\).

Conversely, suppose that \(M\) is odd and every list is \(\{a,b\}\).
A coloring satisfying the stated inequalities would have
\(d_{\varphi}(v,a)=d_{\varphi}(v,b)=d_G(v)/2\) at every vertex.
Thus \(2|\varphi^{-1}(a)|=\sum_v d_{\varphi}(v,a)
=\tfrac12\sum_v d_G(v)=M\), a contradiction.
\end{proof}

\textbf{Proof of Theorem~\ref{thm:two-color-transfer}.}
For part~\textup{(i)}, let \(R=r(H;2)\),
\(s=\min\{R,2d\}-1\), and let \(L\) be any \(2\)-list assignment
on \(K_s\). Suppose first that \(s<2d-1\). Extend \(L\) to a
nonconstant \(2\)-list assignment on \(K_{2d-1}\); this is possible
because there is an edge outside \(K_s\). By
Lemma~\ref{lem:euler-tour-balancing}, there is a coloring in which
every vertex has degree at most \(d-1\) in each color. Its
restriction to \(K_s\) contains no monochromatic \(H\).

Suppose next that \(s=2d-1\). If \(|E(K_s)|\) is even or \(L\)
is nonconstant, the same lemma applies. Otherwise, since \(s<R\),
take an ordinary \(H\)-free two-coloring of \(K_s\) and relabel
its colors by the two colors in the common list. Thus every
\(2\)-list assignment on \(K_s\) has an \(H\)-free coloring, and
\(r_\ell(H;2)\ge s+1=\min\{R,2d\}\). If \(R\le2d\), equality
follows from \(r_\ell(H;2)\le R\).

For part~\textup{(ii)}, by Theorem~\ref{thm:ghk-double-star}, when \(n\ge3m\),
\(r(S(n,m);2)=2n+1\) if \(n\) is odd and \(m\le2\), and
\(r(S(n,m);2)=2n+2\) otherwise. In either case,
\(r(S(n,m);2)\le2\Delta(S(n,m))=2n+2\). By part~\textup{(i)}, the statement follows.\qed

For a red--blue complete graph, let \(G_R\) and
\(G_B\) denote its red and blue spanning subgraphs. For a vertex \(v\)
and a set \(X\), let
\(d_R(v,X):=|N_{G_R}(v)\cap X|\) and define \(d_B(v,X)\)
analogously. For disjoint vertex sets \(X\) and \(Y\), let
\(e_R(X,Y)\) denote the number of red edges between \(X\) and \(Y\).

\begin{lemma}\label{lem:extension}
Let \(n,q\) be integers with \(q\geq3\), let
\(\varepsilon,\eta\in\{0,1\}\), and suppose that
\(n\geq5q-7+2\varepsilon\).
Let \(G\) be a red--blue complete graph, and let \(A\) and \(B\) be
disjoint subsets of \(V(G)\) such that
\(|A|=n-q+1\), \(|B|=n-q-\varepsilon\),
and
\(\Delta(G_B[A])\leq q+\varepsilon-1\), and \(\Delta(G_B[B])\leq q-3\).
Let \(c\notin A\cup B\) and suppose that
\(d_R(c,A\cup B)\geq n-\eta\).
Suppose further that there are distinct vertices
\(u,v\notin A\cup B\cup\{c\}\) such that \(cuv\) is a red path. If
\(\eta=1\), suppose in addition that there is a vertex
\(w\notin A\cup B\cup\{c,u,v\}\) such that \(cw\) is red. Then
\(G\) contains a red copy of \(S_n^q\) centered at \(c\).
\end{lemma}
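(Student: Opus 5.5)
We build the copy of \(S_n^q\) directly from the red star at \(c\). The \(n\) branches will be:
\begin{itemize}[leftmargin=*]
\item the subdivided branch \(c\,u\,v\);
\item the branch \(cw\) when \(\eta=1\);
\item \(n-1-\eta\) further red neighbours of \(c\) inside \(A\cup B\), of which \(q-1\) must be subdivided.
\end{itemize}
Let \(N=N_{G_R}(c)\cap(A\cup B)\), so \(|N|\ge n-\eta\). We need three things:
\begin{itemize}[leftmargin=*]
\item a leaf set \(S\subseteq N\) with \(|S|=n-1-\eta\);
\item distinct \(x_1,\dots,x_{q-1}\in S\);
\item distinct \(y_1,\dots,y_{q-1}\in U:=(A\cup B)\setminus S\) with every \(x_iy_i\) red.
\end{itemize}
Together with \(u,v,w\) this is a red \(S_n^q\) centred at \(c\). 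Since \(u,v,w\notin A\cup B\), there are no collisions with those vertices.

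The only red edges we can control are those inside \(A\) and inside \(B\). Their blue degrees are bounded by \(q+\varepsilon-1\) and \(q-3\), respectively, while edges between \(A\) and \(B\) are arbitrary. So I pair each \(x_i\) with a \(y_i\) in the same part. Greedy matching shows the following. If \(S\cap A\) contains \(p_A\) chosen \(x\)'s and \(|U\cap A|\ge p_A+(q+\varepsilon-1)\), then each \(x_i\in A\) can be matched in turn to a fresh red neighbour in \(U\cap A\). Indeed, at each step at most \(q+\varepsilon-1\) vertices of \(U\cap A\) are blue to \(x_i\), and fewer than \(p_A\) have already been used. The analogous statement holds in \(B\) with threshold \(p_B+q-3\). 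Thus it suffices to choose \(S\) and nonnegative integers \(p_A+p_B=q-1\) satisfying
\begin{align*}
p_A&\le|S\cap A|, & |U\cap A|&\ge p_A+q+\varepsilon-1,\\
p_B&\le|S\cap B|, & |U\cap B|&\ge p_B+q-3,
\end{align*}
where a part with \(p_P=0\) imposes no condition.

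The counting is as follows. We have \(|A\cup B|=2n-2q+1-\varepsilon\). Hence \(|U|=n-2q+2-\varepsilon+\eta\), which is at least \(3q-5+\varepsilon+\eta\) by the hypothesis \(n\ge5q-7+2\varepsilon\). We have freedom in choosing \(S\): any \(|N|-(n-1-\eta)\ge1\) vertices of \(N\) may be left out. Moreover, \(U\) automatically contains \(A\setminus N\) and \(B\setminus N\). I will split into cases according to \(|N\cap A|\) and \(|N\cap B|\).

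\emph{Case 1: \(A\) has enough room for everything.} Suppose \(|A|-(q-1)\ge\) (number of leaves we are forced to put in \(A\)) \(+\,(q+\varepsilon-1)\). Then put all \(q-1\) pairs in \(A\) (\(p_A=q-1\)), and push as many leaves as possible into \(B\). Concretely, take all of \(N\cap B\) into \(S\) first and fill the rest from \(N\cap A\).

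\emph{Case 2: \(N\cap B\) is large and \(B\) has enough room.} Symmetrically, place all pairs in \(B\). This is easier, because \(B\) requires only \(q-3\) spare vertices.

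\emph{Mixed case: neither part alone works.} Then both \(N\cap A\) and \(N\cap B\) are large, and I split the pairs as \(p_A+p_B=q-1\). The free vertices \(U\), of total size at least \(3q-5+\varepsilon+\eta\), are distributed between \(A\) and \(B\) by choosing which vertices of \(N\) to omit.

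I expect the main obstacle to be this final case analysis. One must verify that the slack \(3q-5+\varepsilon+\eta\) always covers the requirement
\[
(q-1)+(q+\varepsilon-1)[p_A>0]+(q-3)[p_B>0],
\]
which is at most \(3q-5+\varepsilon\). The subtle point is that \(U\cap A\) is at least \(|A\setminus N|\) and cannot be made smaller. So it is always possible to enlarge \(U\) in a part, but only by sacrificing leaves there, which must then be available in the other part.

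My first attempt will be the following. Take \(p_A\) as large as \(U\cap A\) allows, and place the remaining pairs in \(B\). Then check the inequality using \(|A|=n-q+1\), \(|B|=n-q-\varepsilon\) and \(|N|\ge n-\eta\). This check is exactly where the constants in \(n\ge5q-7+2\varepsilon\) and \(q\ge3\) should be needed, and where the extra vertex \(w\) compensates when \(\eta=1\).
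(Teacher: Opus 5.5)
\textbf{There is a genuine gap.} The inequalities you reduce to cannot always be satisfied, so the case analysis you defer cannot be completed.

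Take $\eta=1$, and let $c$ be red to all of $A$ and to exactly $q-2$ vertices of $B$. Then $|N|=n-1=n-\eta$, and this is compatible with every hypothesis. Now $|S|=n-2$ and $|S\cap B|\le q-2$, so $p_B\le q-2$ and hence $p_A\ge1$. But $|S\cap A|\ge n-2-(q-2)=n-q$, which forces $|U\cap A|\le|A|-(n-q)=1$. Your greedy condition requires $|U\cap A|\ge p_A+q+\varepsilon-1\ge 3$.

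So none of your three cases applies. In particular, the premise of the mixed case, that $N\cap B$ is large, fails. Taking $p_A$ ``as large as $U\cap A$ allows'' gives $p_A=0$ and then $p_B=q-1>|N\cap B|$.

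Yet the lemma is easy in this configuration. Take any red edge $xy$ inside $A$; it exists because $|A|>q+\varepsilon$. Use $cxy$ as a subdivided branch, sacrificing the red neighbour $y$ as a terminal. Then greedily match the $q-2$ red neighbours of $c$ in $B$ into $B\setminus N$. This is exactly the ``subtle point'' you flagged, and the one-sided greedy cannot get around it.

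\textbf{The missing idea.} When a terminal vertex is itself a spare red neighbour of $c$, you should not fix the subdivision vertex first and then search a large set $U\cap A$. Instead you need a red matching \emph{inside} $N$, whose existence depends only on $N$ not being close to a blue clique.

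This is how the paper proceeds. With $d=|N|$, it sets $a=\min\{q-1,d-n+1+\eta\}$, which is the surplus of red neighbours that can be spent as terminals, and $b=q-1-a$. Write $D_A=q+\varepsilon-1$ and $D_B=q-3$.
\begin{itemize}
\item The $b$ branches go from $N\cap X$ into the non-neighbours $X\setminus N$, for $X\in\{A,B\}$. A maximum red matching there has size at least $\min\{|N\cap X|,(|X\setminus N|-D_X)_+\}$.
\item The $a$ branches come from a maximum red matching in the remaining set $N'\subseteq N$. If that matching were too small, the unmatched vertices would form a red-independent set meeting $A$ and $B$ in blue cliques of orders at most $q+\varepsilon$ and $q-2$. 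That gives $d-a\le 3q+\varepsilon-5$, contradicting $d-a\ge n-1-\eta>3q+\varepsilon-5$.
\end{itemize}

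Your $(S,U)$ framework can host this. You must first replace the threshold $|U\cap P|\ge p_P+D_P$ by an argument that lets the terminal vertex be chosen before its subdivision vertex.
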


\begin{proof}
For \(X\in\{A,B\}\), let \(N_X:=N_{G_R}(c)\cap X\) and
\(M_X:=X\setminus N_X\), and define \(r_X:=|N_X|\),
\(t_X:=|M_X|\), \(D_A:=q+\varepsilon-1\), and \(D_B:=q-3\).
If \(\mu_X\) denotes the maximum size of a red matching between
\(N_X\) and \(M_X\), then
\(\mu_X\geq\min\{r_X,(t_X-D_X)_+\}\),
where \((z)_+:=\max\{z,0\}\). Indeed, if a maximum such matching
does not saturate \(N_X\), an uncovered vertex of \(N_X\) has at
least \((t_X-D_X)_+\) red neighbors in \(M_X\), all of which are
covered by the matching.

Let \(d:=r_A+r_B\), \(a:=\min\{q-1,d-n+1+\eta\}\), and
\(b:=q-1-a\).
Since \(d\geq n-\eta\), we have \(a\geq1\) and \(b\leq q-2\).
Moreover,
\(r_A\geq d-|B|\geq q+\varepsilon-\eta\geq b\) and \(r_B\geq d-|A|\geq q-1-\eta\geq b\).
Assume first that \(b>0\). Then \(a=d-n+1+\eta\), and hence
\(b=n+q-2-\eta-d\).
Therefore,
\begin{align*}
 (t_A-D_A)_++(t_B-D_B)_+
 &\geq t_A+t_B-D_A-D_B\\
 &=2n-4q+5-2\varepsilon-d\\
 &\geq n+q-2-\eta-d=b.
\end{align*}
Let \(p_X:=(t_X-D_X)_+\) for \(X\in\{A,B\}\).
Since \(r_A,r_B\geq b\), the preceding matching bound and
\(p_A+p_B\geq b\) imply
\(\mu_A+\mu_B\geq \min\{b,p_A\}+\min\{b,p_B\} \geq \min\{b,p_A+p_B\}=b\).
For each \(X\in\{A,B\}\), choose a red matching \(F_X\) of size
\(\mu_X\) between \(N_X\) and \(M_X\). Since \(A\cap B=\varnothing\),
\(F_A\cup F_B\) is a matching of size at least \(b\); let \(F\) be
any \(b\)-edge submatching of \(F_A\cup F_B\). If \(b=0\), let
\(F=\varnothing\).

Delete from \(N_A\cup N_B\) the \(b\) endpoints of \(F\) lying in
that set, and denote the remaining set by \(N'\). We claim that
\(G_R[N']\) contains a matching of size \(a\). Otherwise, deleting
the endpoints of a maximum matching leaves a red-independent set
whose intersections with \(A\) and \(B\) are blue cliques of orders
at most \(D_A+1\) and \(D_B+1\), respectively. Therefore,
\(d-b-2(a-1)\leq D_A+D_B+2\).
Since \(a+b=q-1\), it follows that
\(d-a\leq D_A+D_B+q-1=3q+\varepsilon-5\).
On the other hand,
\(d-a =\max\{d-q+1,n-1-\eta\} \geq n-1-\eta >3q+\varepsilon-5\),
where the strict inequality follows from
\(n\geq5q-7+2\varepsilon\), \(q\geq3\), and \(\eta\leq1\).
This is a contradiction.

The matching \(F\) and a red matching of size \(a\) in \(N'\)
form \(a+b=q-1\) red paths of length two beginning at \(c\) that are
pairwise vertex-disjoint outside \(c\). They use \(b+2a\) red neighbors of \(c\) in
\(A\cup B\), leaving at least
\(d-b-2a=d-q+1-a\geq n-q-\eta\) such neighbors unused. These paths,
the path \(cuv\), the edge
\(cw\) when \(\eta=1\), and \(n-q-\eta\) unused red neighbors of
\(c\) form a red copy of \(S_n^q\) centered at \(c\).
\end{proof}

\begin{lemma}\label{lem:one-step-subdivision}
Let \(n,q\) be integers with \(q\geq2\), and let
\(\varepsilon\in\{0,1\}\). Suppose that
\(n\geq3q+\varepsilon\), \(n\geq5q-7+2\varepsilon\), and \((2-\varepsilon)n\geq q^2-2\).
If a red--blue coloring of \(K_{2n-\varepsilon}\) contains a
monochromatic copy of \(S_n^{q-1}\), then it contains a
monochromatic copy of \(S_n^q\).
\end{lemma}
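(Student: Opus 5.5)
We plan to reduce the statement to Lemma~\ref{lem:extension}, applied with the same \(q\) and suitable sets \(A,B\).

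\textbf{Setup.} Assume without loss of generality that the given copy of \(S_n^{q-1}\) is red. Let \(c\) be its center, let \(u_1v_1,\ldots,u_{q-1}v_{q-1}\) be its subdivided branches (so each \(cu_iv_i\) is a red path), and let \(w_1,\ldots,w_{n-q+1}\) be its pendant leaves. The copy uses \(n+q\) vertices, so the remaining set \(R\) has \(n-q-\varepsilon\) vertices.

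\textbf{Easy case.} If some leaf \(w_j\) has a red edge to a vertex of \(R\), we immediately get a red \(S_n^q\). The same holds if a leaf \(w_j\) has a red edge to another leaf \(w_{j'}\), provided we can afford to lose \(w_{j'}\) as a leaf. We can, because \(c\) may have further red neighbours in \(R\); if it has none, we must recount. Either way we may assume the following: every edge between \(W=\{w_j\}\) and \(R\) is blue, and the red graph inside \(W\) is sparse.

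\textbf{Blue side.} Now \(W\cup R\) carries a dense blue bipartite graph \(K_{n-q+1,\,n-q-\varepsilon}\). We try to build a blue \(S_n^q\) centered at some \(w\in W\). Such a \(w\) has \(|R|=n-q-\varepsilon\) blue neighbours in \(R\), and it also has blue neighbours inside \(W\). Its subdivided branches can be routed \(w\to r\to w'\) with \(r\in R\) and \(w'\in W\), and these are plentiful. The count works exactly when \(w\) has at least \(q+\varepsilon\) blue neighbours inside \(W\), or enough blue neighbours among \(\{c,u_i,v_i\}\). So if no blue \(S_n^q\) exists, then \(\Delta(G_B[W])\le q+\varepsilon-1\). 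Symmetrically, a vertex of \(R\) sees all of \(W\) in blue; requiring that no blue \(S_n^q\) is centered in \(R\) should give \(\Delta(G_B[R])\le q-3\). The sharper constant \(q-3\) comes from needing \(n\) blue neighbours total plus \(q\) disjoint second-step vertices in \(W\).

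\textbf{Applying Lemma~\ref{lem:extension}.} Take \(A=W\) and \(B=R\), so \(|A|=n-q+1\) and \(|B|=n-q-\varepsilon\), and keep the center \(c\). We need \(d_R(c,A\cup B)\ge n-\eta\). We already have \(d_R(c,W)=n-q+1\). The remaining red neighbours of \(c\) come either from \(R\) or by reusing some branches: a branch \(u_iv_i\) can be dismantled, with \(u_i\) placed among the allowed extra vertices. Choose \(u=u_1\) and \(v=v_1\). When \(c\) lacks one red neighbour, take \(\eta=1\) and let \(w=u_2\). This leaves \(u_i\) for \(i\ge3\) outside \(A\cup B\); for those we must instead enlarge \(A\) or absorb them. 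We will have to rebalance by moving some \(u_i,v_i\) into \(R\) and removing vertices, which costs the exact sizes.

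\textbf{Main obstacle.} The delicate step is exactly this bookkeeping. We must obtain sets of sizes \(n-q+1\) and \(n-q-\varepsilon\) with the required blue degree bounds, together with \(c\) having at least \(n-\eta\) red neighbours into them. To get there, we would case-split on \(d_R(c,R)\) and use \((2-\varepsilon)n\ge q^2-2\) to guarantee, by counting the blue edges from \(\{u_i,v_i\}\), that the substitution preserves the degree conditions.
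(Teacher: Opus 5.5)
There is a genuine gap, and it sits in the step you call bookkeeping: Lemma~\ref{lem:extension} cannot be applied with the original center $c$, however $A$ and $B$ are rebalanced.

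First, assume (as the proof must) that no monochromatic $S_n^q$ exists. Then every edge from $c$ to $R$ is blue. If $cr$ were red, $r$ could have no red neighbour $r'\in R$, since otherwise you could replace a pendant branch $cw_j$ by the path $crr'$. So $r$ would have $|R|-1\ge 2q-1$ blue neighbours in $R$. This contradicts the bound the blue-star argument gives at vertices of $R$: at most $q-2$ blue neighbours in $(R\setminus\{r\})\cup\{c,u_i,v_i\}$. Hence $d_R(c,W\cup R)=n-q+1<n-1$ once $q\ge3$, and your case split on $d_R(c,R)$ is empty.

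More decisively, a red $S_n^q$ centered at $c$ need not exist at all. For $q\ge3$, color $c$ red to its $n$ leaves $\{u_i\}\cup W$ and blue to $\{v_i\}\cup R$. Color every edge between $W$ and $R\cup\{v_i\}$ blue, and all other edges red. Every blue degree is at most $n-1$, and the blue graphs inside $W$ and inside $R$ are empty, so all your blue-side tests pass. Yet the only red edges from the leaves of $c$ to $\{v_i\}\cup R$ start at the $u_i$, so no red $S_n^q$ is centered at $c$. Meanwhile $u_1$, which is red to every vertex, is the center of one. So no choice of $A,B,u,v,w$ can meet the hypotheses of Lemma~\ref{lem:extension} at $c$.

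Two smaller points. Lemma~\ref{lem:extension} needs $q\ge3$, so $q=2$ must be treated separately. And $\Delta(G_B[R])\le q-3$ relies on all $c$--$R$ edges being blue; the blue-star argument alone gives only $q-2$.

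The missing idea is to move the center while keeping your sets $A=W$ and $B=R$.
Put $Y=\{u_i\}\cup W$ and $Z=\{v_i\}$. The red bipartite graph between $Y$ and $Z\cup R$ has matching number exactly $q-1$: a red $q$-matching plus the red edges from $c$ to $Y$ would be a red $S_n^q$. By K\H{o}nig's theorem (Theorem~\ref{thm:konig}) it therefore has a vertex cover $U$ of size $q-1$, containing exactly one of $u_t,v_t$ for each $t$. Let $i$ and $j$ be the numbers of $u$'s and $v$'s in $U$.

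Then $W$ is blue to every $v_t\notin U$, and $R$ is blue to $c$ and to every $u_t\notin U$. Combined with the blue-star bounds at vertices of $W$ and $R$ (which also count blue neighbours among $\{c,u_i,v_i\}$), this forces:
each vertex of $W$ has at least $i-\varepsilon$ red neighbours in $U$;
each vertex of $R$ has at least $j+2$ red neighbours in $U$;
and $j\le q-3$, i.e.\ $i\ge2$.

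Double counting red edges between $U$ and $W\cup R$ is where $(2-\varepsilon)n\ge q^2-2$ enters, not in any substitution. It yields $p\in U$ with $d_R(p,W\cup R)\ge n-1$ if $p=u_t$, and $\ge n$ if $p=v_t$. Lemma~\ref{lem:extension} is then applied at $p$:
when $p=u_t$, use the red path $u_tcu_s$ (for some $s\ne t$ with $u_s\in U$) and the extra red edge $u_tv_t$;
when $p=v_t$, use the red path $v_tu_tc$.
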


\begin{proof}
Let \(G\) denote the given red--blue colored copy of
\(K_{2n-\varepsilon}\), and suppose that \(G\) contains no
monochromatic copy of \(S_n^q\). Interchanging the colors if
necessary, let \(H\) be a red copy of \(S_n^{q-1}\), with
\(V(H)=\{x,y_1,\ldots,y_n,z_1,\ldots,z_{q-1}\}\) and
\(E(H)=\{xy_t:t\in[n]\}\cup\{y_tz_t:t\in[q-1]\}\).
Let \(A:=\{y_q,\ldots,y_n\}\), \(B:=V(G)\setminus V(H)\), and
\(C:=V(H)\setminus A\). Thus \(|A|=n-q+1\),
\(|B|=n-q-\varepsilon\), and \(|C|=2q-1\).
Every edge between \(A\) and \(B\) is blue, since a red edge
\(ab\), where \(a\in A\) and \(b\in B\), would replace the branch
\(xa\) of \(H\) by the path \(xab\), producing a red \(S_n^q\).

We first establish the following two bounds:
\begin{align}
 d_B\bigl(b,(B\setminus\{b\})\cup C\bigr)
 &\leq q-2
 &&(b\in B),\label{eq:local-B}\\
 d_B\bigl(a,(A\setminus\{a\})\cup C\bigr)
 &\leq q+\varepsilon-1
 &&(a\in A).\label{eq:local-A}
\end{align}
If \eqref{eq:local-B} fails, choose \(q-1\) of the indicated blue
neighbors of \(b\). Together with \(A\), these are \(n\)
blue neighbors of \(b\). Moreover,
\(|B|-q=n-2q-\varepsilon\geq q\), so we may choose \(q\) vertices
of \(A\) as subdivision vertices
and \(q\) distinct unused vertices of \(B\) as their terminal
vertices, producing a blue \(S_n^q\).

The proof of \eqref{eq:local-A} is symmetric. If it fails, use
\(B\) together with \(q+\varepsilon\) indicated blue neighbors of
\(a\), choose the subdivision vertices in \(B\), and choose their
terminal vertices in \(A\). This is possible because
\(|A|-1-(q+\varepsilon)=n-2q-\varepsilon\geq q\).

Every edge between \(x\) and \(B\) is blue. Indeed, suppose that
\(xb\) is red for some \(b\in B\). Every edge from \(b\) to
\(B\setminus\{b\}\) must then be blue, since, if \(bb'\) were red for some \(b'\in B\setminus\{b\}\),
deleting a short branch \(xa\), with \(a\in A\), and adding
the red path \(xbb'\) would produce a red \(S_n^q\). Hence
\(d_B\bigl(b,(B\setminus\{b\})\cup C\bigr)\geq |B|-1>q-2\),
contrary to \eqref{eq:local-B}.

If \(q=2\), then the blue edge \(xb\), for any \(b\in B\), implies
that \(d_B\bigl(b,(B\setminus\{b\})\cup C\bigr)\geq1\), whereas
\eqref{eq:local-B} implies that this degree is at most \(0\), a
contradiction. Hence we may assume that \(q\geq3\).

Let \(Y:=\{y_1,\ldots,y_n\}\) and
\(Z:=\{z_1,\ldots,z_{q-1}\}\). Let \(Q\) be the bipartite graph with
parts \(Y\) and \(Z\cup B\), whose edges are the red edges of \(G\)
between the two parts. The graph \(Q\) contains the matching
\(\{y_tz_t:t\in[q-1]\}\).
Let \(\nu(Q)\) denote the maximum size of a matching in \(Q\).
A matching of size \(q\) in \(Q\), together with the red edges from
\(x\) to \(Y\), would produce a red \(S_n^q\). Therefore,
\(\nu(Q)=q-1\).
By Theorem~\ref{thm:konig}, \(Q\) has a vertex cover \(U\)
of order \(q-1\). Since \(U\) covers the fixed matching, it contains
exactly one of \(y_t,z_t\) for every \(t\in[q-1]\), and no other
vertex.

For \(X\subseteq[q-1]\), define \(Y_X:=\{y_t:t\in X\}\) and
\(Z_X:=\{z_t:t\in X\}\). Let \(I:=\{t:y_t\in U\}\) and
\(J:=\{t:z_t\in U\}\), and let \(i:=|I|\) and \(j:=|J|\). Then
\(I\mathbin{\dot\cup}J=[q-1]\) and \(i+j=q-1\).
Since \(U\) is a vertex cover, every edge between \(A\) and \(Z_I\)
is blue, and every edge between \(B\) and \(Y_J\) is blue. Since
\(xb\) is blue for every \(b\in B\),
inequality~\eqref{eq:local-B} implies that \(j+1\leq q-2\), and hence
\(i\geq2\).

Let \(P:=Z_J\cup Y_I\).
By \eqref{eq:local-A}, every vertex of \(A\) has at least
\(i-\varepsilon\) red neighbors in \(P\), while
\eqref{eq:local-B} implies that every vertex of \(B\) has at least
\(j+2\) red neighbors in \(P\). Therefore,
\(e_R(P,A\cup B)\geq L:=(i-\varepsilon)(n-q+1) +(j+2)(n-q-\varepsilon)\).

Suppose that \(d_R(y_t,A\cup B)\leq n-2\) for \(t\in I\) and
\(d_R(z_t,A\cup B)\leq n-1\) for \(t\in J\).
Then
\(e_R(P,A\cup B)\leq U_0:=(n-2)i+(n-1)j\).
Using \(j=q-1-i\), we obtain
\[
\begin{aligned}
 L-U_0
 &=(2-\varepsilon)n-q^2-1-2\varepsilon
   +(2+\varepsilon)i\\
 &\geq(2-\varepsilon)n-q^2+3
 \geq1,
\end{aligned}
\]
contradicting the preceding lower and upper bounds. Therefore, there
is a vertex \(p\in P\) such that
\(d_R(p,A\cup B)\geq n-\eta(p)\),
where
\(\eta(p)=1\) for \(p\in Y_I\) and \(\eta(p)=0\) for \(p\in Z_J\).

The bounds \eqref{eq:local-B} and \eqref{eq:local-A}, together with
the blue edges between \(x\) and \(B\), imply
\(\Delta(G_B[A])\leq q+\varepsilon-1\), and \(\Delta(G_B[B])\leq q-3\).
If \(p=y_t\in Y_I\), choose \(s\in I\setminus\{t\}\). The red path
\(y_txy_s\) and the red edge \(y_tz_t\) satisfy the hypotheses of
Lemma~\ref{lem:extension} with \(c=p\), \(u=x\), \(v=y_s\),
\(w=z_t\), and \(\eta=1\). If \(p=z_t\in Z_J\), the red path
\(z_ty_tx\) satisfies the hypotheses of Lemma~\ref{lem:extension}
with \(c=p\), \(u=y_t\), \(v=x\), and \(\eta=0\). In either case,
Lemma~\ref{lem:extension} produces a red \(S_n^q\), a contradiction.
\end{proof}

\textbf{Proof of Theorem~\ref{thm:two-color-subdivided}.} By
Theorem~\ref{thm:list-star},
\(r_{\ell}(K_{1,n};2)=r(K_{1,n};2)=2n-\varepsilon\).
Since \(K_{1,n}\subseteq S_n^m\),
\(2n-\varepsilon =r_{\ell}(K_{1,n};2) \leq r_{\ell}(S_n^m;2) \leq r(S_n^m;2)\).
It remains to prove that \(r(S_n^m;2)\leq2n-\varepsilon\).

Let \(G\) be a red--blue coloring of \(K_{2n-\varepsilon}\). By Theorem~\ref{thm:ghk-double-star}, we have \(r(S_n^1;2)=2n-\varepsilon\). Since \(S_n^1=S(n-1,1)\), \(G\) contains a monochromatic copy of \(S_n^1\). For every \(2\le q\le m\), the hypotheses imply
\(n\ge5m-7+2\varepsilon\ge3m+\varepsilon\ge3q+\varepsilon\),
\(n\ge5q-7+2\varepsilon\), and
\((2-\varepsilon)n\ge q^2-2\).
Repeated application of Lemma~\ref{lem:one-step-subdivision}
therefore produces a monochromatic copy of \(S_n^m\). Hence
\(r(S_n^m;2)\leq2n-\varepsilon\).
Therefore,
\(r_{\ell}(S_n^m;2) =r(S_n^m;2) =r(K_{1,n};2) =2n-\varepsilon\).\qed

\section*{Acknowledgements}
\noindent
The authors declare that there is no conflict of competing interest.

\end{document}